\documentclass{amsart}%
\usepackage{amsfonts}
\usepackage{amsmath}
\usepackage{amssymb}
\usepackage{graphicx}%
\setcounter{MaxMatrixCols}{30}
\providecommand{\U}[1]{\protect\rule{.1in}{.1in}}
\newtheorem{theorem}{Theorem}
\theoremstyle{plain}

\newtheorem{corollary}{Corollary}

\newtheorem{definition}{Definition}
\newtheorem{example}{Example}

\newtheorem{lemma}{Lemma}

\newtheorem{proposition}{Proposition}
\newtheorem{remark}{Remark}

\numberwithin{equation}{section}
\begin{document}
\title[Quasi J-ideals of Commutative Rings ]{Quasi J-ideals of Commutative Rings }
\author{Hani A. Khashan }
\address{Department of Mathematics, Faculty of Science, Al al-Bayt University, Al
Mafraq, Jordan.}
\email{hakhashan@aabu.edu.jo}
\author{Ece YETKIN\ CELIKEL}
\address{Department of Electrical-Electronics Engineering, Faculty of Engineering,
Hasan Kalyoncu University, Gaziantep, Turkey.}
\email{ece.celikel@hku.edu.tr, yetkinece@gmail.com}
\thanks{This paper is in final form and no version of it will be submitted for
publication elsewhere.}
\date{December, 2020}
\subjclass{13A15, 13A18, 13A99.}
\keywords{quasi $J$-ideal, $J$-ideal, quasi-presimplifiable ring, presimplifiable ring.}

\begin{abstract}
Let $R$ be a commutative ring with identity. In this paper, we introduce the
concept of quasi $J$-ideal which is a generalization of $J$-ideal. A proper
ideal of $R$ is called a quasi $J$-ideal if its radical is a $J$-ideal. Many
characterizations of quasi $J$-ideals in some special rings are obtained. We
characterize rings in which every proper ideal is quasi $J$-ideal. Further, as
a generalization of presimplifiable rings, we define the notion of quasi
presimplifiable rings. We call a ring $R$ a quasi presimplifiable ring if
whenever $a,b\in R$ and $a=ab$, then either $a$ is a nilpotent or $b$ is a
unit. It is shown that a proper ideal $I$ that is contained in the Jacobson
radical is a quasi $J$-ideal (resp. $J$-ideal) if and only if $R/I$ is a quasi
presimplifiable (resp. presimplifiable) ring.

\end{abstract}
\maketitle

\section{Introduction}

Throughout this paper, we shall assume unless otherwise stated, that all rings
are commutative with non-zero identity. We denote the nilradical of a ring
$R$, the Jacobson radical of $R$, the set of unit elements of $R$, the set of
zero-divisors and the set of all elements that are not quasi-regular in $R$ by
$N(R),$ $J(R),$ $U(R)$, $Z(R),$ and $NZ(R),$ respectively. In \cite{Tek}, the
concept of $n$-ideals in commutative rings is defined and studied. A proper
ideal $I$ of $R$ is said to be a $n$-ideal if whenever $a,b\in R$ with $ab\in
I$ and $a\notin N(R)$, then $b\in I$. Recently, as a generalization of
$n$-ideals, the notion of $J$-ideals is introduced and investigated in
\cite{Hani}. A proper ideal $I$ of $R$ is called a $J$-ideal if whenever
$a,b\in R$ with $ab\in I$ and $a\notin J(R)$, then $b\in I$.

The aim of this article is to extend the notion of $J$-ideals to quasi
$J$-ideals. For the sake of thoroughness, we give some definitions which we
will need throughout this study. For a proper ideal $I$ a ring $R$, let
$\sqrt{I}=\{r\in R$ : there exists $n\in%
\mathbb{N}
$ with $r^{n}\in I\}$ denotes the radical of $I$ and $(I:x)$ denotes the ideal
$\{r\in R$ : $rx\in I\}$. Let $M$ be a unitary $R$-module. Recall that the
idealization $R(+)M=\{(r,m):r\in R,$ $m\in M$\} is a commutative ring with the
addition $(r_{1},m_{1})+(r_{2},m_{2})=(r_{1}+r_{2},m_{1}+m_{2})$ and
multiplication $(r_{1},m_{1})(r_{2},m_{2})=(r_{1}r_{2},r_{1}m_{2}+r_{2}m_{1}%
)$. For an ideal $I$ of $R$ and a submodule $N$ of $M$, it is well-known that
$I(+)N$ is an ideal of $R(+)M$ if and only if $IM\subseteq N$ \cite[Theorem
3.1]{A}. We recall also from \cite[Theorem 3.2]{A} that $\sqrt{I(+)N}=\sqrt
{I}(+)M$, and the Jacobson radical of $R(+)M$ is $J(R(+)M)=J(R)(+)M$. For the
other notations and terminologies that are used in this article, the reader is
referred to \cite{Attiyah}.

We summarize the content of this article as follows. In Section 2, we study
the basic properties of quasi $J$-ideals of a ring $R$. Among many results in
this section, we first start with an example of a quasi $J$-ideal that is not
a $J$-ideal. In Theorem \ref{eq}, we give a characterization for quasi
$J$-ideals. In Theorem \ref{quasi}, we conclude some equivalent conditions
that characterize quasi-local rings. The relations among primary, $\delta_{1}%
$-$n$-ideal and quasi $J$-ideals are clarified (Proposition \ref{delta}).
Moreover, Example \ref{edelta} and Example \ref{eC} are presented showing that
the converses of the used implications are not true in general. Further, in
Theorem \ref{max}, we show that every maximal quasi $J$-ideal is a $J$-ideal.
In Theorem \ref{zero}, we characterize quasi $J$-ideals\ of zero-dimensional
rings in terms of quasi primary ideals. Moreover, the behavior of quasi
$J$-ideals\textbf{ }in polynomial rings, power series rings, localizations,
direct product of rings, idealization rings are investigated (Proposition
\ref{pol}, Proposition \ref{S}, and Proposition \ref{pp}, Remark \ref{r} and
Proposition \ref{pide}).

In Section 3, we introduce quasi presimplifiable rings as a new generalization
of\ presimplifiable rings. We call a ring $R$ quasi presimplifiable if
whenever $a,b\in R$ with $a=ab$, then $a\in N(R)$ or $b\in U(R)$. Clearly, the
classes of presimplifiable and quasi presimplifiable reduced rings coincide.
However, in Example \ref{exp}, we show that in general this generalization is
proper. In Proposition \ref{q1}, it is shown that a ring $R$ is quasi
presimplifiable if and only if $NZ(R)\subseteq J(R)$. The main objective of
the section is to characterize a $J$-ideal (resp. a quasi $J$-ideal) of $R$ as
the ideal $I$ for which $R/I$ is a presimplifiable (resp. quasi
presimplifiable) ring. This characterization is used to justify more results
concerning the class of $J$-ideals (resp. quasi $J$-ideals). For example, in
Theorem \ref{dun}, it is shown that if $\{I_{\alpha}:\alpha\in\Lambda\}$ is a
family of $J$-ideals (resp. quasi $J$-ideals) over a system of rings
$\{R_{\alpha}:\alpha\in\Lambda\}$, then $I=\bigcup\limits_{\alpha\in\Lambda
}\varphi_{\alpha}(I_{\alpha})$ is a $J$-ideal (resp. quasi $J$-ideal) of
$R=\underrightarrow{\lim}R_{\alpha}$.

\section{Properties of Quasi $J$-ideals}

\begin{definition}
Let $R$ be a ring. A proper ideal $I$ of $R$ is said to be a quasi $J$-ideal
if $\sqrt{I}$ is a $J$-ideal.
\end{definition}

It is clear that every $J$-ideal is a quasi $J$-ideal. However, this
generalization is proper and the following is an example of a quasi $J$-ideal
in a certain ring which is not a $J$-ideal.

\begin{example}
Consider the idealization ring $R=%
\mathbb{Z}
(+)%
\mathbb{Z}
$. Then $I=0(+)%
\mathbb{Z}
$ is a $J$-ideal of $R$ since $0$ is a $J$-ideal of $%
\mathbb{Z}
$ by \cite[Proposition 3.12]{Hani}. Now, $\sqrt{0(+)2%
\mathbb{Z}
}=\sqrt{0}(+)%
\mathbb{Z}
=0(+)%
\mathbb{Z}
$ is a $J$-ideal of $R$, and thus $0(+)2%
\mathbb{Z}
$ is a quasi $J$-ideal of $R$. However, $0(+)2%
\mathbb{Z}
$ is not a $J$-ideal of $R$ since for example $(0,1),(2,0)\in R$ with
$(2,0)\cdot(0,1)=(0,2)\in0(+)2%
\mathbb{Z}
$ and $(2,0)\notin J(R)=J(%
\mathbb{Z}
)(+)%
\mathbb{Z}
=0(+)%
\mathbb{Z}
$ but $(0,1)\notin0(+)2%
\mathbb{Z}
$.
\end{example}

Our starting point is the following characterization for quasi $J$-ideals.

\begin{theorem}
\label{eq}Let $I$ be a proper ideal of a ring $R.$ Then the following
statements are equivalent:
\end{theorem}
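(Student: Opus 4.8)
The theorem is a characterization theorem for quasi $J$-ideals, so I need to anticipate what the equivalent conditions are. Given the definition (a proper ideal $I$ is a quasi $J$-ideal if $\sqrt{I}$ is a $J$-ideal) and the analogous theory of $J$-ideals in \cite{Hani} and quasi-primary / $n$-ideals, the natural list of equivalent conditions for a proper ideal $I$ of $R$ should be something like: (1) $I$ is a quasi $J$-ideal; (2) $\sqrt{I}\subseteq J(R)$ and for $a,b\in R$, if $ab\in I$ then $a\in J(R)$ or $b\in\sqrt{I}$; (3) whenever $ab\in I$ with $a\notin J(R)$, then $b\in\sqrt{I}$; (4) $\sqrt{I}\subseteq J(R)$ and for every $a\notin J(R)$, $(I:a)\subseteq\sqrt{I}$ (equivalently $\sqrt{(I:a)}=\sqrt{I}$); (5) for every $a\notin J(R)$, $\sqrt{(I:a)}=\sqrt{I}$; possibly also a condition phrased as $\sqrt{I}=\{a\in R: a^n\in I \text{ for some }n\}$ coincides with $(I:s)$ for some element, or in terms of $NZ(R)$.

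**The plan.**

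The plan is to prove the chain of implications cyclically. First I would recall that, by \cite[Theorem 2.2 or similar]{Hani}, a proper ideal $P$ is a $J$-ideal if and only if $P\subseteq J(R)$ and $ab\in P$, $a\notin J(R)$ implies $b\in P$; and a key lemma from \cite{Hani} states that a $J$-ideal is necessarily contained in $J(R)$, and conversely if $P\subseteq J(R)$ then $P$ is a $J$-ideal iff it is "strongly primary-like" with respect to $J(R)$. So the equivalence "$\sqrt{I}$ is a $J$-ideal" unpacks directly into "$\sqrt{I}\subseteq J(R)$ together with the absorption condition on $\sqrt{I}$." The translation between the absorption condition stated for $\sqrt{I}$ and the condition stated for $I$ itself is the heart of the matter: if $ab\in I$ then $(ab)=a b\in\sqrt{I}$ trivially, but I need to push a product landing in $I$ to a conclusion about membership in $\sqrt{I}$, which is clean because $\sqrt{I}$ is a radical ideal, so $x\in\sqrt{I}$ iff $x^n\in I$ for some $n$, and $xy\in\sqrt{I}$ with $x\notin J(R)$ forces $y\in\sqrt{I}$ (applying the $J$-ideal property of $\sqrt{I}$); conversely from the condition on $I$ one recovers the condition on $\sqrt{I}$ by taking an $n$ with $(ab)^n=a^nb^n\in I$ and noting $a^n\notin J(R)$ since $J(R)$ is a radical (prime-like) — actually since $J(R)$ is an intersection of maximal (hence prime) ideals, $a\notin J(R)$ does NOT immediately give $a^n\notin J(R)$; rather $a\notin J(R)$ means $a$ is outside some maximal ideal $\mathfrak{m}$, and then $a^n\notin\mathfrak{m}$, so $a^n\notin J(R)$. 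That is the crucial small observation and I would isolate it as the first step.

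So concretely the steps, in order: (i) observe $J(R)$ is a radical ideal in the weak sense that $a\notin J(R)\Rightarrow a^n\notin J(R)$ for all $n$ (via a maximal ideal avoiding $a$); (ii) assume (1), i.e. $\sqrt I$ is a $J$-ideal, hence $\sqrt I\subseteq J(R)$; take $ab\in I$ with $a\notin J(R)$; then $ab\in\sqrt I$ and since $\sqrt I$ is a $J$-ideal, $b\in\sqrt I$ — giving the $I$-level condition (2)/(3); (iii) assume (3): first show $\sqrt I\subseteq J(R)$ — take $x\in\sqrt I$, so $x^n\in I$ for some minimal such $n$; if $x\notin J(R)$ then writing $x^n=x\cdot x^{n-1}\in I$ and $x\notin J(R)$ gives $x^{n-1}\in\sqrt I$, hence $x^{n-1}$ has a power in $I$, contradicting minimality of $n$ unless $n=1$, in which case $x\in I\subseteq$ ... hmm, need to be a touch careful, but the descent argument works: iterate to reach $x\in\sqrt I$ contradiction — actually the cleanest route is: if some $x\in\sqrt I\setminus J(R)$ then $x^k\in I$ for some $k$, and $x\notin J(R)\Rightarrow$ (by the condition, with $a=x$, $b=x^{k-1}$, since $x\cdot x^{k-1}=x^k\in I$) $x^{k-1}\in\sqrt I$, and $x\notin J(R)$ again gives... this just re-derives $x\in\sqrt I$; instead use that $x^k\in I\subseteq\sqrt I$ and apply the $J$-ideal absorption we're about to establish — to avoid circularity I would prove $\sqrt I\subseteq J(R)$ by a direct argument: if $x\notin J(R)$, pick $\mathfrak m$ maximal with $x\notin\mathfrak m$; if also $x\in\sqrt I$ then $x^k\in I$, and I want a contradiction with $(3)$ — take $a=x$, which is a non-unit-power outside $J(R)$; hmm. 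Let me just say: the subtlety is showing $\sqrt I\subseteq J(R)$ from the absorption-type condition, and I expect THIS to be the main obstacle; the resolution is to observe that $1\notin\sqrt I$ (as $I$ is proper) while if $x\in\sqrt I\setminus J(R)$ one uses $x^n\in I$ with $x^{n-1}\notin I$ for appropriate $n$, rewrites $x\cdot x^{n-1}\in I$, and since $x\notin J(R)$ concludes $x^{n-1}\in\sqrt I$; but then $x^{n-1}$ also lies outside $J(R)$ (by step (i), since $x\notin J(R)$), so we may repeat, eventually landing at $x\in\sqrt I$ which we already knew — so actually no contradiction arises this way and I must instead get $\sqrt I\subseteq J(R)$ for free from the hypothesis being stated WITH "$\sqrt I\subseteq J(R)$" as part of it, meaning condition (3) should really be read as "$\sqrt I\subseteq J(R)$ and [absorption]". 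Then (iii) becomes: from (3)+($\sqrt I\subseteq J(R)$), show $\sqrt I$ is a $J$-ideal: it is proper and contained in $J(R)$; if $xy\in\sqrt I$ with $x\notin J(R)$, then $(xy)^n=x^ny^n\in I$, and $x^n\notin J(R)$ by (i), so by the absorption condition (applied with $a=x^n$, $b=y^n$, $a b=x^n y^n \in I$) we get $y^n\in\sqrt I$, whence $y\in\sqrt I$ — done, so (3)$\Rightarrow$(1).

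**Where the difficulty lies.**

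I expect the only genuinely delicate point to be the bookkeeping around radicals: keeping straight that "$a\notin J(R)\Rightarrow a^n\notin J(R)$" (which needs $J(R)$ to be an intersection of primes) but that the corresponding implication with $J(R)$ replaced by $\sqrt I$ is automatic, and making sure each stated equivalent condition already carries the hypothesis $\sqrt I\subseteq J(R)$ (or derives it, e.g. from a form like "$\sqrt{(I:a)}=\sqrt I$ for all $a\notin J(R)$" together with $I$ proper, which forces $1\notin\sqrt I$ hence, taking any $a$, forces the containment). The remaining implications — e.g. rephrasing "$ab\in I$, $a\notin J(R)\Rightarrow b\in\sqrt I$" as "$(I:a)\subseteq\sqrt I$ for all $a\notin J(R)$", and thence as "$\sqrt{(I:a)}=\sqrt I$" (using $I\subseteq(I:a)$ so $\sqrt I\subseteq\sqrt{(I:a)}$ always, and the reverse from the condition) — are routine ideal-theoretic manipulations with $(I:a)$ and radicals, which I would dispatch quickly once the core equivalence (1)$\Leftrightarrow$(3) is in hand.
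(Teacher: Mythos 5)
Your proposal is correct and takes essentially the same route as the paper: the crucial implication from the element-level condition (``$ab\in I$ and $a\notin J(R)$ imply $b\in\sqrt{I}$'') back to ``$\sqrt{I}$ is a $J$-ideal'' is handled exactly as in the paper, by taking $ab\in\sqrt{I}$, raising to a power so that $a^{n}b^{n}\in I$, and observing that $a^{n}\notin J(R)$ since $J(R)$ is an intersection of maximal (hence prime) ideals; the converse direction likewise matches, and the paper's extra ideal-theoretic forms (2) and (3) are the routine $(I:a)$-type reformulations you flag as easy. The one place you hesitate --- whether $\sqrt{I}\subseteq J(R)$ must be built into the hypothesis --- is a non-issue: the definition of $J$-ideal asks only for properness plus the absorption property, and the containment in $J(R)$ is automatically derivable (from $x=x\cdot 1\in I$ and $1\notin\sqrt{I}$, then taking radicals), which is why the paper states condition (4) without it and its proof of (4)$\Rightarrow$(1) never needs it.
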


\begin{enumerate}
\item $I$ is a quasi $J$-ideal of $R.$

\item If $a\in R$ and $K$ is an ideal of $R$ with $aK\subseteq I$, then $a\in
J(R)$ or $K\subseteq\sqrt{I}.$

\item If $K$ and $L$ are ideals of $R$ with $KL\subseteq I$, then $K\subseteq
J(R)$ or $L\subseteq\sqrt{I}.$

\item If $a,b\in R$ and $ab\in I$, then $a\in J(R)$ or $b\in\sqrt{I}.$
\end{enumerate}

\begin{proof}
(1)$\Rightarrow$(2) Suppose that $I$ is a quasi $J$-ideal of $R,$ $aK\subseteq
I$ and $a\notin J(R).$ Since $\sqrt{I}$ is a $J$-ideal, $\sqrt{I}=(\sqrt
{I}:a)$ by \cite[Proposition 2.10]{Hani}. Thus $K\subseteq(I:a)\subseteq
(\sqrt{I}:a)=\sqrt{I}.$

(2)$\Rightarrow$(3) Suppose that $KL\subseteq I$ and $K\nsubseteq J(R).$ Then
there exists $a\in K\backslash J(R).$ Since $aL\subseteq I$ and $a\notin
J(R)$, we have $L\subseteq\sqrt{I}$ by our assumption.

(3)$\Rightarrow$(4) Suppose that $a,b\in R$ and $ab\in I$. The result follows
by letting $K=<a>$ and $L=<b>$ in (3).

(4)$\Rightarrow$(1) We show that $\sqrt{I}$ is a $J$-ideal. Suppose that
$ab\in\sqrt{I}$ and $a\notin J(R)$. Then there exists a positive integer $n$
such that $a^{n}b^{n}\in I$ and $a\notin J(R)$. It follows clearly that
$a^{n}\notin J(R)$ and so $b^{n}\in\sqrt{I}$ by (4). Therefore, $b\in
\sqrt{\sqrt{I}}=\sqrt{I}$ and $I$ is a quasi $J$-ideal.
\end{proof}

As a consequence of Theorem \ref{eq}, we have the following.

\begin{corollary}
Let $L$ be an ideal of a ring $R$ such that $L\nsubseteq J(R)$. Then
\end{corollary}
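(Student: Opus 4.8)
The corollary continues the thread of Theorem~\ref{eq}, so the plan is to read off a statement about the colon ideal $(I : L)$ when $L \nsubseteq J(R)$. The natural candidate, paralleling \cite[Proposition 2.10]{Hani}, is that $I$ is a quasi $J$-ideal of $R$ if and only if $(I : L) \subseteq \sqrt{I}$ for every ideal $L$ with $L \nsubseteq J(R)$; equivalently $(I:L) = \sqrt{I}$ when in addition $\sqrt{I} \subseteq (I:L)$, which holds whenever $L \subseteq \sqrt{I}$ or more simply one just asserts the inclusion $(I:L)\subseteq\sqrt I$.

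First I would prove the forward direction: assume $I$ is a quasi $J$-ideal and let $a \in (I : L)$, so $aL \subseteq I$. By the equivalence (1)$\Rightarrow$(2) of Theorem~\ref{eq}, either $a \in J(R)$ or $L \subseteq \sqrt{I}$; since the latter is excluded by hypothesis $L \nsubseteq J(R)$ together with... wait, one must be careful: $L \subseteq \sqrt I$ does not contradict $L \nsubseteq J(R)$ in general. So the cleanest correct statement is conditional: \emph{if} moreover $L \nsubseteq \sqrt I$, then $(I:L) \subseteq J(R)$. Alternatively, and I think this is what the corollary intends, one uses part (3): for ideals $K, L$ with $KL \subseteq I$ and $L \nsubseteq \sqrt I$, we get $K \subseteq J(R)$ — but the hypothesis as written is $L \nsubseteq J(R)$, which via (3) with the roles arranged gives: if $KL \subseteq I$ then $K \subseteq \sqrt I$ or $L \subseteq J(R)$, hence $KL \subseteq I$ and $L \nsubseteq J(R)$ force $K \subseteq \sqrt I$. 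So the corollary presumably reads: $I$ quasi $J$-ideal implies $(I:L) \subseteq \sqrt I$ for all $L \nsubseteq J(R)$, and since trivially $\sqrt I \subseteq (I:L)$ is false in general one records just the inclusion, or adds ``$\sqrt I \subseteq (\sqrt I : L)$ always'' to upgrade to equality $(\sqrt I : L) = \sqrt I$.

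For the converse direction (if that is part of the statement), I would assume $(I:L) \subseteq \sqrt I$ for every $L \nsubseteq J(R)$ and verify condition (4) of Theorem~\ref{eq}: given $ab \in I$ with $a \notin J(R)$, take $L = \langle a \rangle \nsubseteq J(R)$; then $b \in (I : L) \subseteq \sqrt I$, so $I$ is a quasi $J$-ideal. This is immediate. The main obstacle — really the only subtlety — is pinning down the exact hypothesis/conclusion pairing so that the implication is actually valid: the condition $L \nsubseteq J(R)$ is what lets us \emph{apply} Theorem~\ref{eq}(2) to an element $a \in L \setminus J(R)$ when we want to conclude something \emph{about $L$}, whereas to conclude something \emph{about $(I:L)$} we feed elements of $(I:L)$ into the ``$a$'' slot and need the ``$L \subseteq \sqrt I$'' alternative to be the one we can rule out. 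I would state it as: $(I:L) \subseteq \sqrt{I}$ whenever $L \nsubseteq \sqrt{I}$, and note this follows from Theorem~\ref{eq}(3) by setting $K = (I:L)$, since then $KL \subseteq I$ and $L \nsubseteq \sqrt I$ (hence also, if one prefers, after replacing $L$ by its radical, $L \nsubseteq J(R)$ gives the same). Everything else is a one-line application of the theorem already proved.
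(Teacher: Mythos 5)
You have proved the wrong statement. The corollary in the paper (whose conclusion consists of two enumerated items following the preamble you were shown) is a cancellation-type result: (1) if $I$ and $K$ are quasi $J$-ideals of $R$ with $IL=KL$, then $\sqrt{I}=\sqrt{K}$; and (2) if $IL$ is a quasi $J$-ideal for an ideal $I$, then $\sqrt{IL}=\sqrt{I}$. Your proposal instead establishes a colon-ideal statement, namely that $(I:L)\subseteq\sqrt{I}$ when $I$ is a quasi $J$-ideal and $L\nsubseteq J(R)$. That statement is true, and your derivation of it from Theorem \ref{eq}(3) (feeding $L\langle a\rangle\subseteq I$ into the theorem and ruling out $L\subseteq J(R)$) is correct; in fact it is essentially Lemma \ref{1} and Lemma \ref{2} of the paper, proved later. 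But it is not the corollary at hand, so as a proof of this corollary the proposal has a genuine gap: the intended conclusion is never addressed.

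For the record, the intended corollary does follow from Theorem \ref{eq}(3) by almost the same move you used, just aimed at $K$ rather than at $(I:L)$. For (1): since $KL=IL\subseteq I$ and $I$ is a quasi $J$-ideal with $L\nsubseteq J(R)$, Theorem \ref{eq}(3) applied to the product $LK\subseteq I$ forces $K\subseteq\sqrt{I}$, hence $\sqrt{K}\subseteq\sqrt{I}$; by symmetry (using that $K$ is a quasi $J$-ideal and $IL=KL\subseteq K$) one gets $\sqrt{I}\subseteq\sqrt{K}$. For (2): $LI\subseteq IL$ and $L\nsubseteq J(R)$ give $I\subseteq\sqrt{IL}$ by Theorem \ref{eq}(3), while $\sqrt{IL}\subseteq\sqrt{I}$ is automatic from $IL\subseteq I$. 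Your caution about which alternative of the theorem can be ruled out by the hypothesis $L\nsubseteq J(R)$ is exactly the right instinct; it simply needed to be applied with $K$ (respectively $I$) in the ``conclusion slot'' and $L$ in the ``$J(R)$ slot.''
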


\begin{enumerate}
\item If $I$ and $K$ are quasi $J$-ideals of $R$ with $IL=KL$, then $\sqrt
{I}=\sqrt{K}.$

\item If for an ideal $I$ of $R$, $IL$ is a quasi $J$-ideal, then $\sqrt
{IL}=\sqrt{I}.$
\end{enumerate}

Let $I$ be a proper ideal of $R$. We denote by $J(I)$, the intersection of all
maximal ideals of $R$ containing $I$. Next, we obtain the following
characterization for quasi $J$-ideals of $R.$

\begin{proposition}
\label{J(I)}Let $I$ be an ideal of $R.$ Then the following statements are equivalent:
\end{proposition}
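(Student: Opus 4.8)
The starting observation is that every $J$-ideal of $R$ automatically sits inside $J(R)$: if $L$ is a $J$-ideal and $a\in L$, then writing $a=a\cdot 1\in L$ and using $1\notin L$ forces $a\in J(R)$. Hence, if $I$ is a quasi $J$-ideal, then $\sqrt{I}$ is a $J$-ideal, so $I\subseteq\sqrt{I}\subseteq J(R)$; consequently every maximal ideal of $R$ contains $I$, and therefore $J(I)=J(R)$. This is exactly what produces the ``$J(I)=J(R)$'' clause in the equivalent conditions, and it is the only place where $J(I)$ must be traded back for $J(R)$ — I expect this to be the routine part of the argument.

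For the remaining implications I would bounce between the conditions using Theorem \ref{eq} as the bridge. From $(1)$, part $(4)$ of Theorem \ref{eq} gives $ab\in I\Rightarrow a\in J(R)$ or $b\in\sqrt{I}$; combined with $J(I)=J(R)$ this is precisely the element-wise condition expressed through $J(I)$ and $\sqrt{I}$, and the passage to ideals is carried out just as in the proof of Theorem \ref{eq} (if $a\notin J(I)$ and $aK\subseteq I$, then $ak\in I$, hence $k\in\sqrt{I}$, for every $k\in K$). Equivalently one can phrase everything with colon ideals and invoke \cite[Proposition 2.10]{Hani}, by which $\sqrt{I}$ is a $J$-ideal precisely when $(\sqrt{I}:a)=\sqrt{I}$ for every $a\in R\backslash J(R)$; applied to $L=\sqrt{I}$ this translates the quasi $J$-ideal property into the colon-ideal form of the statement.

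To close the cycle back to $(1)$, I would verify directly that $\sqrt{I}$ is a $J$-ideal. Properness is immediate from $J(I)=J(R)$, since then $\sqrt{I}\subseteq J(R)\subsetneq R$. If $ab\in\sqrt{I}$ and $a\notin J(R)$, then $a^{n}b^{n}\in I$ for some $n$, and $a^{n}\notin J(R)$ (a power of an element missing some maximal ideal again misses it, since maximal ideals are prime); feeding the pair $a^{n},b^{n}$ — or the element $a^{n}$ together with the principal ideal $\langle b^{n}\rangle$ — into the element-wise/ideal condition yields $b^{n}\in\sqrt{I}$, and hence $b\in\sqrt{\sqrt{I}}=\sqrt{I}$. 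The one point needing care, which I would flag as the main obstacle, is the bookkeeping with radicals under the colon operation: one only has $\sqrt{(I:a)}\subseteq(\sqrt{I}:a)$ in general, with the inclusion possibly strict, so radicals must be taken at the right moment and the idempotence $\sqrt{\sqrt{I}}=\sqrt{I}$ used exactly as in the $(4)\Rightarrow(1)$ step of Theorem \ref{eq}, rather than attempting to commute $\sqrt{\ }$ past $(\ :a)$.
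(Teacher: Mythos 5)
Your argument is correct and follows essentially the same route as the paper's: both directions reduce to the element-wise characterization of quasi $J$-ideals (Theorem \ref{eq}) together with the observation that $I\subseteq J(R)$ forces $J(I)=J(R)$ (the paper only needs the unconditional inclusion $J(R)\subseteq J(I)$ for the forward direction and $J(I)\subseteq J(J(R))=J(R)$ for the converse). Your inline re-derivation of the $(4)\Rightarrow(1)$ step of Theorem \ref{eq} and your direct proof that $J$-ideals lie in $J(R)$ merely make explicit what the paper delegates to Theorem \ref{eq} and to \cite[Proposition 2.2]{Hani}.
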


\begin{enumerate}
\item $I$ is a quasi $J$-ideal of $R.$

\item $I\subseteq J(R)$ and if whenever $a,b\in R$ with $ab\in I$, then $a\in
J(I)$ or $b\in\sqrt{I}.$
\end{enumerate}

\begin{proof}
(1)$\Rightarrow$(2) Suppose $I$ is a quasi $J$-ideal of $R.$ Since $\sqrt{I}$
is a $J$-ideal, then $I\subseteq\sqrt{I}\subseteq J(R)$ by \cite[Proposition
2.2]{Hani}. Now, (2) follows clearly since $J(R)\subseteq J(I)$.

(2)$\Rightarrow$(1) Suppose that $ab\in I$ and $a\notin J(R).$ Since
$I\subseteq J(R),$ we conclude that $J(I)\subseteq J(J(R))=J(R)$ and so we get
$a\notin J(I).$ Thus, $b\in\sqrt{I}$ and $I$ is a quasi $J$-ideal of $R.$
\end{proof}

In the following theorem, we characterize rings in which every proper
(principal) ideal is a quasi $J$-ideal.

\begin{theorem}
\label{quasi}For a ring $R$, the following statements are equivalent:
\end{theorem}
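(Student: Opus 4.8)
The plan is to prove the stated equivalences by a short cycle of implications whose backbone is
\[
\text{every proper ideal is a quasi }J\text{-ideal} \;\Longrightarrow\; \text{every proper principal ideal is a quasi }J\text{-ideal} \;\Longrightarrow\; R \text{ is quasi-local} \;\Longrightarrow\; \text{every proper ideal is a quasi }J\text{-ideal},
\]
with the first arrow being immediate (a principal ideal is an ideal). The same triviality reconciles any $J$-ideal formulation of the conditions with its quasi $J$-ideal counterpart, since every $J$-ideal is a quasi $J$-ideal and, as the proof will show, in a quasi-local ring every proper ideal is in fact an honest $J$-ideal.

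For the crucial implication, that ``every proper principal ideal is a quasi $J$-ideal'' forces $R$ to be quasi-local, I would pass through the Jacobson radical. Let $a\in R$ be a non-unit. If $a=0$ then $a\in J(R)$ trivially; otherwise $(a)$ is a proper principal ideal, hence a quasi $J$-ideal by hypothesis, so $\sqrt{(a)}$ is a $J$-ideal and therefore $\sqrt{(a)}\subseteq J(R)$ by \cite[Proposition 2.2]{Hani}. In particular $a\in(a)\subseteq\sqrt{(a)}\subseteq J(R)$. Thus $R\setminus U(R)\subseteq J(R)$, and since the reverse inclusion always holds, the set of non-units of $R$ coincides with the ideal $J(R)$; a ring whose non-units form an ideal is quasi-local.

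For the return implication, suppose $R$ is quasi-local with maximal ideal $\mathfrak{m}=J(R)$, and let $I$ be a proper ideal. Then $\sqrt{I}\subseteq\mathfrak{m}=J(R)$. To see that $\sqrt{I}$ is a $J$-ideal, take $a,b\in R$ with $ab\in\sqrt{I}$ and $a\notin J(R)$; then $a\notin\mathfrak{m}$, so $a\in U(R)$ and $b=a^{-1}(ab)\in\sqrt{I}$. Hence $\sqrt{I}$ is a $J$-ideal, i.e.\ $I$ is a quasi $J$-ideal (and the identical argument applied to $I$ itself shows $I$ is a $J$-ideal). Alternatively, this direction is instantaneous from the characterization in Theorem~\ref{eq}(4), since $ab\in I$ with $a\notin J(R)$ forces $a$ to be a unit and hence $b\in I\subseteq\sqrt{I}$.

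I do not anticipate a genuine obstacle; the argument is brief once one recalls that $J$-ideals always lie inside $J(R)$ \cite[Proposition 2.2]{Hani}. The only points needing a little care are the separate (trivial) treatment of the zero ideal when deducing quasi-locality from the principal-ideal hypothesis, and keeping track of which conditions quantify over all proper ideals versus all proper principal ideals so that the cycle of implications genuinely closes.
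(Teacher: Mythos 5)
Your argument is correct as far as it goes, and it takes a genuinely different (and arguably cleaner) route for the hard direction. The paper closes the cycle as $(5)\Rightarrow(6)\Rightarrow(1)$: from the hypothesis on principal ideals it first deduces that every maximal ideal is a quasi $J$-ideal, and only then, via $M=\sqrt{M}\subseteq J(R)$, that $R$ is quasi-local. You instead prove $(5)\Rightarrow(1)$ directly: every non-unit $a$ generates a proper principal ideal, whose radical is a $J$-ideal and hence lies in $J(R)$ by \cite[Proposition 2.2]{Hani}, so the non-units form an ideal. This avoids the paper's somewhat awkward intermediate step entirely. You also prove the return direction $(1)\Rightarrow(3)$ from scratch (in a quasi-local ring, $a\notin J(R)$ means $a$ is a unit, so every proper ideal is a $J$-ideal), where the paper simply cites \cite[Proposition 2.3]{Hani}; both are fine, and your self-contained version is what that citation amounts to anyway.

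The one genuine omission is condition $(6)$, ``every maximal ideal of $R$ is a quasi $J$-ideal,'' which your cycle never touches: nothing in your write-up connects a hypothesis about maximal ideals to the others. The repair is immediate with tools you already use: $(4)\Rightarrow(6)$ is trivial since maximal ideals are proper, and $(6)\Rightarrow(1)$ is exactly your own mechanism, since a maximal quasi $J$-ideal $M$ satisfies $M=\sqrt{M}\subseteq J(R)$ by \cite[Proposition 2.2]{Hani}, forcing $M=J(R)$ to be the unique maximal ideal. You should add these two lines so that all six conditions are actually linked.
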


\begin{enumerate}
\item $R$ is a quasi-local ring.

\item Every proper principal ideal of $R$ is a $J$-ideal.

\item Every proper ideal of $R$ is a $J$-ideal.

\item Every proper ideal of $R$ is a quasi $J$-ideal.

\item Every proper principal ideal of $R$ is a quasi $J$-ideal.

\item Every maximal ideal of $R$ is a quasi $J$-ideal.
\end{enumerate}

\begin{proof}
(1)$\Rightarrow$(2)$\Rightarrow$(3) is clear by \cite[Proposition 2.3]{Hani}.

Since (3)$\Rightarrow$(4)$\Rightarrow$(5) is also clear, we only need to prove
(5)$\Rightarrow$(6) and (6)$\Rightarrow$(1).

(5)$\Rightarrow$(6) Assume that every proper principal ideal of $R$ is a quasi
$J$-ideal. Let $M$ be a maximal ideal of $R.$ Suppose that $ab\in M$ and
$a\notin\sqrt{M}=M.$ Since $<ab>$ is proper in $R$, $(ab)$ is a quasi
$J$-ideal by our assumption. Since $ab\in<ab>$ and clearly $a\notin\sqrt
{<ab>}$, we conclude that $b\in J(R),$ as required.

(6)$\Rightarrow$(1) Let $M$ be a maximal ideal of $R$. Then $M$ is a quasi
$J$-ideal by (6) which implies $M=\sqrt{M}\subseteq J(R)$ by \cite[Proposition
2.2]{Hani}. Thus, $J(R)=M$; and so $R$ is a quasi-local ring.
\end{proof}

Let $R$ be a ring and denote the set of all ideals of $R$ by $L(R)$. D. Zhao
\cite{Zhao} introduced the concept of expansions of ideals of the ring $R$. A
function $\delta:L(R)\rightarrow L(R)$ is called an ideal expansion if the
following conditions are satisfied for any ideals $I$ and $J$ of $R$ :

\begin{enumerate}
\item $I\subseteq\delta(I)$.

\item Whenever $I\subseteq J$, then $\delta(I)\subseteq$ $\delta(J)$.
\end{enumerate}

For example, $\delta_{1}:L(R)\rightarrow L(R)$ defined by $\delta_{1}%
(I)=\sqrt{I}$ is an ideal expansion of a ring $R$. For an ideal expansion
$\delta$ defined on a ring $R$, the class of $\delta$-$n$-ideals has been
defined and studied recently in \cite{Ece}. A proper ideal $I$ of $R$ is
called a $\delta$-$n$-ideal if whenever $a,b\in R$ and $ab\in I$, then $a\in
N(R)$ or $b\in\delta(I)$.

\begin{proposition}
\label{delta}Let $I$ be a proper ideal of $R$.
\end{proposition}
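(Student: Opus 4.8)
The plan is to reduce both parts to the characterization of quasi $J$-ideals in Theorem \ref{eq}, after which each becomes a one-line consequence of the defining condition together with the inclusion $N(R)\subseteq J(R)$ and the fact that $J(R)$ is a radical ideal.

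For the first part I would suppose $I$ is a $\delta_{1}$-$n$-ideal and take $a,b\in R$ with $ab\in I$. Since $\delta_{1}(I)=\sqrt{I}$, the definition of a $\delta_{1}$-$n$-ideal gives $a\in N(R)$ or $b\in\sqrt{I}$; in the first case $a\in N(R)\subseteq J(R)$. Hence in every case $a\in J(R)$ or $b\in\sqrt{I}$, which is precisely statement (4) of Theorem \ref{eq}, so $I$ is a quasi $J$-ideal.

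For the second part I would suppose $I$ is primary with $\sqrt{I}\subseteq J(R)$ and verify the definition directly, i.e. that $\sqrt{I}$ is a $J$-ideal. Because $I$ is primary, $\sqrt{I}$ is a prime ideal, so it suffices to observe that any prime ideal $P$ with $P\subseteq J(R)$ is automatically a $J$-ideal: if $ab\in P$ and $a\notin J(R)$, then $a\notin P$, and primeness forces $b\in P$. Applying this with $P=\sqrt{I}$ shows $\sqrt{I}$ is a $J$-ideal, hence $I$ is a quasi $J$-ideal. (One could equally phrase the hypothesis as $I\subseteq J(R)$, since for a radical ideal $J(R)$ this is equivalent to $\sqrt{I}\subseteq J(R)$.)

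There is no genuine obstacle in the forward directions; the only point worth isolating is the implication ``$P$ prime and $P\subseteq J(R)$ $\Rightarrow$ $P$ is a $J$-ideal.'' The substantive content is rather that neither implication reverses, and I would settle this by producing the counterexamples of Example \ref{edelta} and Example \ref{eC} — for instance, by Theorem \ref{quasi} every proper ideal of a quasi-local ring is a quasi $J$-ideal, yet such a ring has proper ideals that are neither primary nor $\delta_{1}$-$n$-ideals (the latter failing since a $\delta_{1}$-$n$-ideal must have $\sqrt{I}=N(R)$).
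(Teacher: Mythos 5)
Your proposal is correct and follows essentially the same route as the paper: part (1) is verified via condition (4) of Theorem \ref{eq} using $N(R)\subseteq J(R)$, exactly as in the paper's proof. In part (2) the paper applies the primary condition to $I$ directly (if $b\notin\sqrt{I}$ then $a\in I\subseteq J(R)$, a contradiction) rather than passing to the primeness of $\sqrt{I}$, but your observation that a prime ideal contained in $J(R)$ is automatically a $J$-ideal is an equivalent, equally short argument (and is consistent with Corollary \ref{J}).
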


\begin{enumerate}
\item If $I$ is a $\delta_{1}$-$n$-ideal, then $I$ is a quasi $J$-ideal of
$R.$

\item If $I$ is a primary ideal of $R$ and $I\subseteq J(R)$, then $I$ is a
quasi $J$-ideal of $R$.
\end{enumerate}

\begin{proof}
(1) Suppose that $ab\in I$ and $a\notin J(R)$. Then $a\notin N(R)$ as
$N(R)\subseteq J(R).$ Since $I$ is a $\delta_{1}$-$n$-ideal, we have
$b\in\delta_{1}(I)=\sqrt{I}$. By Theorem \ref{eq}, we conclude that $I$ is a
quasi $J$-ideal of $R.$

(2) Suppose that $ab\in I$ and $a\notin J(R)$. If $b\notin\sqrt{I}$, then
$a\in I$ since $I$ is a primary ideal of $R$ which contradicts the assumption
that $I\subseteq J(R)$. Therefore, $b\in$ $\sqrt{I}$ and $I$ is a quasi
$J$-ideal by Theorem \ref{eq}.
\end{proof}

However, the converses of the implications in Proposition \ref{delta} are not
true in general as we can see in the following two examples.

\begin{example}
\label{edelta}Consider the quasi-local ring $%
\mathbb{Z}
_{\left\langle 2\right\rangle }=\left\{  \frac{a}{b}:a,b\in%
\mathbb{Z}
,\text{ }2\nmid b\right\}  $. Then $J(%
\mathbb{Z}
_{\left\langle 2\right\rangle })=$ $\left\langle 2\right\rangle _{\left\langle
2\right\rangle }=\left\{  \frac{a}{b}:a\in\left\langle 2\right\rangle ,2\nmid
b\right\}  $ is a quasi $J$-ideal of $%
\mathbb{Z}
_{\left\langle 2\right\rangle }$ by Theorem \ref{quasi}. On the other hand,
$\left\langle 2\right\rangle _{\left\langle 2\right\rangle }$ is not a
$\delta_{1}$-$n$-ideal. Indeed, if we take $\frac{2}{3},\frac{3}{5}\in%
\mathbb{Z}
_{\left\langle 2\right\rangle }$, then $\frac{2}{3}.\frac{3}{5}=\frac{6}%
{15}\in\left\langle 2\right\rangle _{\left\langle 2\right\rangle }$ but
$\frac{2}{3}\notin N(%
\mathbb{Z}
_{\left\langle 2\right\rangle })=0_{%
\mathbb{Z}
_{\left\langle 2\right\rangle }}$ and $\frac{3}{5}\notin\sqrt{\left\langle
2\right\rangle _{\left\langle 2\right\rangle }}=\left\langle 2\right\rangle
_{\left\langle 2\right\rangle }$.
\end{example}

\begin{example}
\label{eC}Consider the ring $C(%
\mathbb{R}
)$ of all real valued continuous functions and let $M=\left\{  f\in C(%
\mathbb{R}
):f(0)=0\right\}  $. Then $M$ is a maximal ideal of $C(%
\mathbb{R}
)$. Consider the quasi-local ring $R=\left(  C(%
\mathbb{R}
)\right)  _{M}$ and let $I=\left\langle x\sin x\right\rangle _{M}$. Then $I$
is a quasi $J$-ideal by Theorem \ref{quasi}. On the other hand $I$ is not
primary since for example $x\sin x\in I$ but $x^{n}\notin I$ and $\sin
^{n}x\notin I$ for all integers $n$.
\end{example}

Recall that a ring $R$ is said to be semiprimitive if $J(R)=0.$

\begin{proposition}
Let $R$ be a semiprimitive ring.
\end{proposition}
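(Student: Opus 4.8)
The statement to be proved concerns a semiprimitive ring $R$, i.e., one with $J(R)=0$. Given the development so far, the natural conclusion to expect is a characterization of quasi $J$-ideals (or $J$-ideals) in this setting: since $J(R)=0$, Proposition \ref{J(I)} forces any quasi $J$-ideal $I$ to satisfy $I\subseteq J(R)=0$, hence $I=0$; and conversely $0$ is a quasi $J$-ideal precisely when $\sqrt{0}=N(R)$ is a $J$-ideal, which (again by \cite[Proposition 2.2]{Hani}) requires $N(R)\subseteq J(R)=0$, i.e. $R$ is reduced. So the anticipated statement is: \emph{$R$ has a quasi $J$-ideal if and only if $R$ is reduced, and in that case the only quasi $J$-ideal is $0$} (and likewise for $J$-ideals).

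**The plan.** First I would prove the forward direction. Let $I$ be any quasi $J$-ideal of $R$. By definition $\sqrt{I}$ is a $J$-ideal, so by \cite[Proposition 2.2]{Hani} we have $\sqrt{I}\subseteq J(R)=0$, whence $\sqrt{I}=0$ and therefore $I=0$ as well (since $I\subseteq\sqrt I$). This already pins down that the only candidate is the zero ideal. Next, since $\sqrt{I}=\sqrt 0 = N(R)$ must be a $J$-ideal (in particular a proper ideal, which it is), apply \cite[Proposition 2.2]{Hani} once more to get $N(R)\subseteq J(R)=0$; thus $R$ is reduced. For the converse, suppose $R$ is reduced. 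Then $N(R)=0=J(R)$, and I must check that $0$ is a $J$-ideal of $R$: if $ab\in 0$ and $a\notin J(R)=0$, I want $b\in 0$. But $ab=0$ with $a$ possibly a zero-divisor does \emph{not} force $b=0$ in a general reduced ring — so this is the delicate point.

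**The main obstacle.** The genuine content is the converse, and here I expect the hypothesis must be read more carefully: being semiprimitive and reduced is \emph{not} enough to make $0$ a $J$-ideal (take $R=\mathbb{Z}\times\mathbb{Z}$, where $(1,0)\cdot(0,1)=(0,0)$ but neither factor is zero nor in $J(R)=0$). So I anticipate the actual proposition either (i) asserts only the forward implication — "if $R$ is semiprimitive and $I$ is a quasi $J$-ideal then $I=0$ and $R$ is reduced" — which is clean and follows immediately from \cite[Proposition 2.2]{Hani} as above, or (ii) adds the hypothesis that $R$ is moreover an integral domain (or indecomposable), in which case $ab=0$, $a\neq 0$ forces $b=0$ and $0$ is indeed a $J$-ideal. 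Under reading (ii) the argument is: in a semiprimitive domain $J(R)=0$ and $R$ has no nonzero zero-divisors, so for $ab\in 0$ with $a\notin J(R)=0$ we get $b=0\in 0$; hence $0$ is a $J$-ideal, so it is its own radical and a quasi $J$-ideal, and by the forward direction it is the unique one.

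**How I would write it up.** I would first dispose of the trivial-sounding but load-bearing forward direction via Proposition \ref{J(I)} / \cite[Proposition 2.2]{Hani}, stating crisply that in a semiprimitive ring every quasi $J$-ideal equals $0$ and its existence forces $R$ reduced. Then I would handle whichever extra hypothesis the proposition carries for the converse, using the zero-divisor-free (or indecomposability) condition to verify directly that $0$ satisfies the defining implication of a $J$-ideal, and conclude by invoking Theorem \ref{eq}(4) to package everything as the equivalence. The only step requiring care is making sure the converse hypothesis is strong enough; everything else is a one-line appeal to the cited propositions.
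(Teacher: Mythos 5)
Your proposal, under your reading (ii), reconstructs exactly the paper's statement (for a semiprimitive $R$: $R$ is a domain iff the zero ideal is the only quasi $J$-ideal, and a non-domain semiprimitive ring has none) and proves it the same way: uniqueness via $I\subseteq\sqrt{I}\subseteq J(R)=0$ from \cite[Proposition 2.2]{Hani}/Proposition \ref{J(I)}, and existence reduced to whether $0=\sqrt{0}$ is a $J$-ideal, which with $J(R)=0$ is precisely the condition that $0$ be prime. Your identification of the delicate point (reduced is not enough, e.g.\ $\mathbb{Z}\times\mathbb{Z}$) is exactly the content of part (2) of the paper's proof, so the argument is correct and essentially identical to the paper's.
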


\begin{enumerate}
\item $R$ is an integral domain if and only if the only quasi $J$-ideal of $R$
is the zero ideal.

\item If $R$ is not an integral domain, then $R$ has no quasi $J$-ideals.
\end{enumerate}

\begin{proof}
(1) Suppose that $R$ is an integral domain. Then it is easy to show that $0$
is a quasi $J$-ideal of $R.$ If $I$ is a non-zero quasi $J$-ideal, then by
Proposition \ref{J(I)} we have $I\subseteq J(R)=0$ which is a contradiction.

(2) Suppose that $I$ is a quasi $J$-ideal of $R.$ Then $I\subseteq\sqrt
{I}\subseteq J(R)=0$. But since $R$ is not integral domain, then $0$ is not a
prime ideal of $R$ and so clearly it is not a quasi $J$-ideal.
\end{proof}

Let $R$ be a ring and $S$ be a non-empty subset of $R$. Then clearly $\left(
I:S\right)  =\left\{  r\in R:rS\subseteq I\right\}  $ is an ideal of $R$. Now,
while it is clear that $\sqrt{\left(  I:S\right)  }\subseteq\left(  \sqrt
{I}:S\right)  $, the reverse inclusion need not be true in general. For
example, consider $S=\left\{  2\right\}  \subseteq%
\mathbb{Z}
$ and the ideal $I=\left\langle 12\right\rangle $ of $%
\mathbb{Z}
$. Then $\sqrt{\left(  I:S\right)  }=\sqrt{\left\langle 6\right\rangle
}=\left\langle 6\right\rangle $ while $\left(  \sqrt{I}:S\right)
=\left\langle 3\right\rangle $.

\begin{lemma}
\label{1}If $I$ is a quasi $J$-ideal of a ring $R$ and $S\nsubseteq J(R)$ is a
subset of $R$, then $\sqrt{\left(  I:S\right)  }=\left(  \sqrt{I}:S\right)  $.
\end{lemma}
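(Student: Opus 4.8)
The plan is to prove the nontrivial inclusion $\left(\sqrt{I}:S\right)\subseteq\sqrt{\left(I:S\right)}$; the reverse inclusion $\sqrt{\left(I:S\right)}\subseteq\left(\sqrt{I}:S\right)$ holds for every ideal $I$ and every subset $S$, as already noted just before the statement. In fact I would aim for the slightly stronger containment $\left(\sqrt{I}:S\right)\subseteq\sqrt{I}$ and then close the argument with the obvious containment $\sqrt{I}\subseteq\sqrt{\left(I:S\right)}$.

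First I would record that $\sqrt{I}\subseteq\sqrt{\left(I:S\right)}$: if $r\in I$, then $rS\subseteq I$ since $I$ is an ideal, so $I\subseteq\left(I:S\right)$ and hence $\sqrt{I}\subseteq\sqrt{\left(I:S\right)}$. Next, take an arbitrary $x\in\left(\sqrt{I}:S\right)$, so that $xS\subseteq\sqrt{I}$. Since $S\nsubseteq J(R)$, choose $s\in S$ with $s\notin J(R)$. Then $sx\in\sqrt{I}$, and because $I$ is a quasi $J$-ideal, the ideal $\sqrt{I}$ is by definition a $J$-ideal. Applying the defining property of a $J$-ideal to $sx\in\sqrt{I}$ with $s\notin J(R)$ gives $x\in\sqrt{I}$. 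Combining, $\left(\sqrt{I}:S\right)\subseteq\sqrt{I}\subseteq\sqrt{\left(I:S\right)}$, which together with the general inclusion in the other direction yields the claimed equality.

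There is essentially no obstacle here; the only point worth being careful about is that we do not need every element of $S$ to lie outside $J(R)$ — a single witness $s\in S\setminus J(R)$ is enough to push $x$ into $\sqrt{I}$ via the $J$-ideal property, and the hypothesis $S\nsubseteq J(R)$ supplies exactly such an $s$. One could alternatively phrase the middle step using \cite[Proposition 2.10]{Hani} (that $(\sqrt{I}:s)=\sqrt{I}$ for $s\notin J(R)$), but invoking the definition of a $J$-ideal directly is the most economical route.
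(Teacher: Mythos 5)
Your proof is correct and follows essentially the same route as the paper: pick an element of $\left(\sqrt{I}:S\right)$, use a witness $s\in S\setminus J(R)$ together with the $J$-ideal property of $\sqrt{I}$ to land in $\sqrt{I}$, and then pass to $\sqrt{\left(I:S\right)}$ via $I\subseteq\left(I:S\right)$. The paper's version is just a terser statement of the identical argument, so nothing further is needed.
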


\begin{proof}
If $a\in\left(  \sqrt{I}:S\right)  $, then $sa\in\sqrt{I}$ for all $s\in S$.
Choose $s\notin J(R)$ such that $sa\in\sqrt{I}$. Then $a\in\sqrt{I}$ as $I$ is
a quasi $J$-ideal and so clearly, $a\in\sqrt{\left(  I:S\right)  }$. The other
inclusion is obvious.
\end{proof}

\begin{lemma}
\label{2}Let $S$ be a subset of a ring $R$ with $S\nsubseteq J(R)$ and $I$ be
a proper ideal of $R$. If $I$ is a quasi $J$-ideal, then $(I:S)$ is a quasi
$J$-ideal.
\end{lemma}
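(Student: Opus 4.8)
The plan is to check the two things needed for $(I:S)$ to be a quasi $J$-ideal: that it is a proper ideal, and that it satisfies condition (4) of Theorem \ref{eq}. For the second point the key idea is to push a factorization living inside $(I:S)$ down to a factorization living inside $I$, apply the hypothesis on $I$ element-by-element over $S$, and then repackage the conclusion using Lemma \ref{1}.

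First I would verify that $(I:S)$ is proper. Since $I$ is a quasi $J$-ideal, Proposition \ref{J(I)} gives $I\subseteq J(R)$. If we had $(I:S)=R$, then $1\in(I:S)$, hence $S\subseteq I\subseteq J(R)$, contradicting the hypothesis $S\nsubseteq J(R)$; so $(I:S)$ is proper.

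Next, to show $(I:S)$ is a quasi $J$-ideal I would invoke Theorem \ref{eq}(4): suppose $a,b\in R$ with $ab\in(I:S)$ and $a\notin J(R)$, and aim to conclude $b\in\sqrt{(I:S)}$. For each $s\in S$ we have $a(bs)=(ab)s\in I$; since $I$ is a quasi $J$-ideal and $a\notin J(R)$, Theorem \ref{eq}(4) applied to the pair $(a,bs)$ forces $bs\in\sqrt{I}$. As $s$ was arbitrary in $S$, this gives $bS\subseteq\sqrt{I}$, i.e. $b\in(\sqrt{I}:S)$. Now Lemma \ref{1}, whose hypotheses ($I$ a quasi $J$-ideal and $S\nsubseteq J(R)$) are precisely what we have, yields $(\sqrt{I}:S)=\sqrt{(I:S)}$, so $b\in\sqrt{(I:S)}$. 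By Theorem \ref{eq}, $(I:S)$ is a quasi $J$-ideal.

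I do not expect a real obstacle here: the genuinely substantive fact, that the radical commutes with the colon operation against a set not contained in $J(R)$, has already been isolated as Lemma \ref{1}, so what remains is bookkeeping. The one point to be careful about is that the hypothesis $S\nsubseteq J(R)$ gets used twice — once to obtain properness of $(I:S)$, and once to legitimately apply Lemma \ref{1} — and therefore cannot be dispensed with.
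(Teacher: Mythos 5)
Your proof is correct and follows essentially the same route as the paper's: establish properness of $(I:S)$ from $I\subseteq J(R)$ and $S\nsubseteq J(R)$, deduce $bS\subseteq\sqrt{I}$ from $abS\subseteq I$ via Theorem \ref{eq}, and convert $(\sqrt{I}:S)$ to $\sqrt{(I:S)}$ by Lemma \ref{1}. The only cosmetic difference is that you apply Theorem \ref{eq}(4) element-by-element over $S$ while the paper applies the ideal/set version in one step.
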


\begin{proof}
We first note that $(I:S)$ is proper in $R$ since otherwise if $1\in(I:S)$,
then $S\subseteq I\subseteq J(R)$, a contradiction. Suppose that $ab\in(I:S)$
and $a\notin J(R)$ for $a,b\in R$. Then $abS\subseteq I$ and $a\notin J(R)$
which imply that $bS\subseteq\sqrt{I}$ by Theorem 1. Thus, $b\in\left(
\sqrt{I}:S\right)  =\sqrt{\left(  I:S\right)  }$ by Lemma \ref{1} and we are done.
\end{proof}

A quasi $J$-ideal $I$ of a ring $R$ is called a maximal quasi $J$-ideal if
there is no quasi $J$-ideal which contains $I$ properly. In the following
proposition, we justify that any maximal quasi $J$-ideal is a $J$-ideal.

\begin{theorem}
\label{max}Let $I$ be a maximal quasi $J$-ideal of $R$. Then $I$ is a
$J$-ideal of $R$.
\end{theorem}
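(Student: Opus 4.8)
The plan is to show that if $I$ is a maximal quasi $J$-ideal then $I$ must already equal its own radical $\sqrt{I}$, which is a $J$-ideal by definition of quasi $J$-ideal. To this end, first I would recall that $\sqrt{I}$ is itself a quasi $J$-ideal: indeed $\sqrt{\sqrt{I}} = \sqrt{I}$ is a $J$-ideal, so $\sqrt{I}$ satisfies the defining condition. Moreover $\sqrt{I}$ is proper, since $\sqrt{I}$ being a $J$-ideal forces $\sqrt{I} \subseteq J(R)$ by \cite[Proposition 2.2]{Hani}, and in particular $1 \notin \sqrt{I}$. So $\sqrt{I}$ is a quasi $J$-ideal containing $I$.

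Now invoke the maximality hypothesis: $I$ is a maximal quasi $J$-ideal, meaning no quasi $J$-ideal properly contains it. Since $\sqrt{I}$ is a quasi $J$-ideal with $I \subseteq \sqrt{I}$, maximality gives $\sqrt{I} = I$. But then $I = \sqrt{I}$, and $\sqrt{I}$ is a $J$-ideal by hypothesis (that is precisely what "quasi $J$-ideal" means for $I$). Hence $I$ is a $J$-ideal of $R$.

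I expect no serious obstacle here; the argument is essentially a one-line observation once one notices that radicals of quasi $J$-ideals are again quasi $J$-ideals and remain proper. The only point requiring a little care is confirming that $\sqrt{I}$ is genuinely a \emph{proper} ideal (so that it counts as a candidate quasi $J$-ideal in the partial order under consideration), which follows from the containment $\sqrt{I} \subseteq J(R) \subsetneq R$ coming from \cite[Proposition 2.2]{Hani}. Everything else is immediate from the definitions.
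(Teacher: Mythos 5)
Your proof is correct, but it takes a genuinely different route from the paper's. You observe that $\sqrt{I}$ is itself a quasi $J$-ideal (since $\sqrt{\sqrt{I}}=\sqrt{I}$ is a $J$-ideal, and $\sqrt{I}$ is proper because $J$-ideals are proper by definition and lie in $J(R)$), then invoke maximality on the containment $I\subseteq\sqrt{I}$ to get $I=\sqrt{I}$, which is a $J$-ideal by hypothesis. The paper instead fixes $a,b$ with $ab\in I$ and $a\notin J(R)$, applies Lemma \ref{2} to conclude that $(I:a)$ is a quasi $J$-ideal containing $I$, and uses maximality to force $(I:a)=I$, whence $b\in I$; this is the classical colon-ideal pattern for ``maximal element of a family is well-behaved'' arguments and leans on the machinery of Lemmas \ref{1} and \ref{2}. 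Your argument is shorter and self-contained --- it needs nothing beyond the definition and the idempotence of the radical --- and it yields the slightly stronger byproduct that a maximal quasi $J$-ideal is a radical ideal. The paper's route avoids having to notice that $\sqrt{I}$ is again a quasi $J$-ideal, and its supporting lemmas are reused elsewhere, but for this particular theorem your approach is the more economical one.
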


\begin{proof}
Suppose $I$ is a maximal quasi $J$-ideal of $R$. Let $a,b\in R$ such that
$ab\in I$ and $a\notin J(R)$. Then $\left(  I:a\right)  $ is a quasi $J$-ideal
of $R$ by Lemma \ref{2}. Since $I$ is a maximal quasi $J$-ideal and
$I\subseteq\left(  I:a\right)  $, then $b\in\left(  I:a\right)  =I$.
Therefore, $I$ is a $J$-ideal of $R$.
\end{proof}

If $J(R)$ is a quasi $J$-ideal of a ring $R$, then clearly it is the unique
maximal quasi $J$-ideal of $R$. In this case, $J(R)$ is a prime ideal of $R$
as can be seen in the following corollary.

\begin{corollary}
\label{J}Let $R$ be a ring. The following are equivalent:

\begin{enumerate}
\item $J(R)$ is a $J$-ideal of $R$.

\item $J(R)$ is a quasi $J$-ideal of $R$.

\item $J(R)$ is a prime ideal of $R$.
\end{enumerate}
\end{corollary}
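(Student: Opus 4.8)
The plan is to prove the cycle $(1)\Rightarrow(2)\Rightarrow(3)\Rightarrow(1)$. The implication $(1)\Rightarrow(2)$ is immediate, since every $J$-ideal is a quasi $J$-ideal by definition. For $(3)\Rightarrow(1)$, assume $J(R)$ is prime. Then $\sqrt{J(R)}=J(R)$, so it suffices to show $J(R)$ is itself a $J$-ideal; but a prime ideal $P$ with $P\subseteq J(R)$ is always a $J$-ideal, since if $ab\in P$ and $a\notin J(R)$ then in particular $a\notin P$, whence $b\in P$ by primeness. (In fact primeness of $J(R)$ forces $\sqrt{J(R)}=J(R)$ anyway, and one only needs the containment $J(R)\subseteq J(R)$, which is trivial.) Thus $J(R)$ is a $J$-ideal, giving $(1)$.

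The substantive step is $(2)\Rightarrow(3)$. Suppose $J(R)$ is a quasi $J$-ideal. By definition $\sqrt{J(R)}$ is a $J$-ideal; but $J(R)$ is a radical ideal (it is an intersection of maximal, hence prime, ideals), so $\sqrt{J(R)}=J(R)$ and therefore $J(R)$ itself is a $J$-ideal. Now I want to conclude that $J(R)$ is prime. Take $a,b\in R$ with $ab\in J(R)$ and suppose $b\notin J(R)$. Applying the $J$-ideal property of $J(R)$ to the product $ba\in J(R)$ with $b\notin J(R)$ yields $a\in J(R)$. Hence $ab\in J(R)$ implies $a\in J(R)$ or $b\in J(R)$, so $J(R)$ is prime (it is proper since $1\notin J(R)$).

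I expect no real obstacle here: the whole corollary hinges on the single observation that $J(R)$ is a radical ideal, so that "quasi $J$-ideal" collapses to "$J$-ideal" for $J(R)$, and then on the elementary fact that an ideal contained in $J(R)$ is a $J$-ideal precisely when it behaves like a prime with respect to elements outside $J(R)$ — which for $J(R)$ itself is exactly primeness. If one prefers, the equivalence $(1)\Leftrightarrow(2)$ can also be cited directly: since $J(R)\subseteq J(R)$, Theorem \ref{eq} (condition (4)) applied to $I=J(R)$ says $I$ is a quasi $J$-ideal iff $ab\in J(R)$ implies $a\in J(R)$ or $b\in\sqrt{J(R)}=J(R)$, which is visibly the statement that $J(R)$ is prime; and \cite[Proposition 2.2 or 2.3]{Hani} (or the argument above) closes the loop with $(1)$. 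I would present it via the short three-line cycle above to keep it self-contained.
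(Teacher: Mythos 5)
Your proof is correct. All three implications check out: the equivalence $(1)\Leftrightarrow(2)$ collapses because $J(R)$ is an intersection of maximal ideals and hence a radical ideal, so $\sqrt{J(R)}=J(R)$; the implication $(2)\Rightarrow(3)$ correctly exploits the symmetry of the $J$-ideal condition applied to $ba\in J(R)$ with $b\notin J(R)$; and $(3)\Rightarrow(1)$ is the standard observation that a prime ideal contained in $J(R)$ is automatically a $J$-ideal. The paper itself gives no written proof, but the sentence preceding the corollary signals the intended route for $(2)\Rightarrow(1)$: if $J(R)$ is a quasi $J$-ideal then it is the unique maximal quasi $J$-ideal (every quasi $J$-ideal lies inside $J(R)$ by Proposition \ref{J(I)}), and Theorem \ref{max} then upgrades it to a $J$-ideal. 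Your argument bypasses that machinery entirely by noticing that $J(R)$ is radical, which is both shorter and more self-contained; the paper's route has the advantage of illustrating Theorem \ref{max} in action but requires Lemma \ref{2} behind the scenes. Either way the content of the corollary reduces, as you say, to reading off condition (4) of Theorem \ref{eq} at $I=J(R)$.
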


Recall from \cite{F} that a proper ideal of a ring $R$ is called a quasi
primary ideal if its radical is prime. We prove in the following theorem that
under a certain condition on $R$, quasi primary ideals and quasi $J$-ideal are
the same.

\begin{theorem}
\label{zero}Let $R$ be a zero-dimensional ring and $I$ be an ideal of $R$ with
$I\subseteq J(R).$ Then the following are equivalent:
\end{theorem}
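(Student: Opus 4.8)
The plan is to prove the chain of equivalences for a zero-dimensional ring $R$ with $I\subseteq J(R)$, where the statement should assert that the following are equivalent: (1) $I$ is a quasi $J$-ideal of $R$; (2) $I$ is a quasi primary ideal of $R$; (3) $\sqrt{I}$ is a maximal ideal of $R$. I would first exploit the key structural fact about zero-dimensional rings: every prime ideal is maximal, so $J(R)$ coincides with the intersection of all prime ideals only when $R$ is quasi-local — but more to the point, in a zero-dimensional ring a prime ideal $P$ is maximal, hence $\sqrt{I}$ prime is the same as $\sqrt{I}$ maximal. This immediately gives the equivalence of (2) and (3) with essentially no work.

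For (3)$\Rightarrow$(1), suppose $\sqrt{I}=M$ is maximal. I would show $M$ is a $J$-ideal: since $I\subseteq J(R)$ and $\sqrt{I}\subseteq J(R)$ (radicals of ideals contained in $J(R)$ stay inside $J(R)$ because $J(R)$ is a radical ideal), we get $M\subseteq J(R)$, and since $M$ is maximal this forces $M=J(R)$, so $R$ is quasi-local. Then by Theorem \ref{quasi} every proper ideal — in particular $M=\sqrt{I}$ — is a $J$-ideal, hence $I$ is a quasi $J$-ideal. Alternatively one can invoke Corollary \ref{J}: $J(R)=M$ is prime, hence a $J$-ideal. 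For (1)$\Rightarrow$(3): if $I$ is a quasi $J$-ideal then $\sqrt{I}$ is a $J$-ideal, so by \cite[Proposition 2.2]{Hani} $\sqrt{I}\subseteq J(R)$; I then need $\sqrt{I}$ to be maximal. Here I would use that a $J$-ideal whose radical sits inside $J(R)$, in a zero-dimensional ring, must have prime (hence maximal) radical — this is where I'd lean on the zero-dimensionality: $\sqrt{I}=\sqrt{\sqrt{I}}$ is an intersection of the minimal primes over it, each of which is maximal, and one shows a $J$-ideal cannot be a proper intersection of two or more distinct maximal ideals because distinct maximal ideals $M_1,M_2$ would give $M_1 M_2\subseteq \sqrt I$ with neither $M_i\subseteq J(R)$ unless $R$ is quasi-local — forcing $R$ quasi-local and $\sqrt I = J(R)=M$, maximal.

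The main obstacle I anticipate is the direction establishing that a quasi $J$-ideal in this setting has maximal radical, i.e. ruling out $\sqrt{I}$ being an intersection of several maximal ideals or, more subtly, handling the interaction between zero-dimensionality and the Jacobson radical. The cleanest route is: from $I$ quasi $J$-ideal and $I\subseteq J(R)$, Proposition \ref{J(I)} (or \cite[Proposition 2.2]{Hani}) gives $\sqrt{I}\subseteq J(R)$; if $\sqrt I$ were not maximal, pick two distinct maximal ideals $M_1\ne M_2$ minimal over $I$ (possible since $\dim R=0$ forces all primes over $I$ to be maximal and there are $\geq 2$). Then $M_1 M_2 \subseteq M_1\cap M_2 \subseteq \sqrt I$ is contained in $\sqrt I$ which is a $J$-ideal, so $M_1\subseteq J(R)\subseteq M_2$, contradicting $M_1\ne M_2$ maximal. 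Hence $\sqrt I$ is maximal, proving (1)$\Rightarrow$(3). I would then assemble the three implications (1)$\Rightarrow$(3)$\Rightarrow$(2)$\Rightarrow$(1), with (2)$\Leftrightarrow$(3) being the free consequence of zero-dimensionality, and close the proof.
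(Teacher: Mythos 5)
Your reconstruction of the theorem is close but incomplete: the paper's list has four items, and besides (1) quasi $J$-ideal, (2) quasi primary, and the condition that $(R,\sqrt{I})$ is quasi-local (which, given $I\subseteq J(R)$, is equivalent to your ``$\sqrt{I}$ is maximal''), it also asserts (3) $I=P^{n}$ for some prime ideal $P$ and some $n\geq 1$. Your three-way equivalence never touches this item, so even if your argument were airtight it would not establish the stated theorem. (To be fair, the paper itself dismisses (2)$\Rightarrow$(3) with the word ``clearly,'' and that step genuinely needs an argument; but it is part of the claim and your proposal does not address it.)

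The substantive gap in what you do prove is in (1)$\Rightarrow$(3): you write $M_{1}M_{2}\subseteq M_{1}\cap M_{2}\subseteq\sqrt{I}$, but the second containment points the wrong way. Since $\sqrt{I}$ is the intersection of \emph{all} primes containing $I$, we have $\sqrt{I}\subseteq M_{1}\cap M_{2}$, not the reverse, and $M_{1}M_{2}\subseteq\sqrt{I}$ fails as soon as a third maximal ideal lies over $I$: in $k\times k\times k$ with $I=0$ one has $M_{1}M_{2}=0\times 0\times k\nsubseteq 0=\sqrt{I}$. So the contradiction you want never materializes from that containment. The repair is short and is essentially what the paper does: because $\dim R=0$ and $I\subseteq J(R)$, every prime is maximal and every maximal ideal contains $I$, so $\sqrt{I}=J(I)=J(R)$; the paper then argues element-wise that for $ab\in\sqrt{I}$ with $a\notin\sqrt{I}=J(I)$, Proposition \ref{J(I)} forces $b\in\sqrt{I}$, i.e. $\sqrt{I}$ is prime (equivalently, one can quote Corollary \ref{J}: $J(R)=\sqrt{I}$ a $J$-ideal implies $J(R)$ prime, hence maximal). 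Your (2)$\Leftrightarrow$``$\sqrt{I}$ maximal'' and your quasi-local $\Rightarrow$ (1) step via Theorem \ref{quasi} are correct and match the paper.
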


\begin{enumerate}
\item $I$ is a quasi $J$-ideal of $R.$

\item $I$ is a quasi primary ideal of $R.$

\item $I=P^{n}$ for some prime ideal $P$ of $R$ and some positive integer $n$.

\item $(R,\sqrt{I})$ is a quasi-local ring.
\end{enumerate}

\begin{proof}
(1)$\Rightarrow$(2) Suppose that $ab\in\sqrt{I}$ and $a\notin\sqrt{I}$. Then
there exists a positive number $n$ such that $a^{n}b^{n}\in I$. Since $R$ is
zero-dimensional, then every prime ideal is maximal and so $\sqrt{I}=J(I)$.
Since $I$ is a quasi $J$-ideal and clearly $a^{n}\notin J(I)$, we conclude
$b^{n}\in\sqrt{I}$ by Theorem \ref{J(I)}. Thus $b\in\sqrt{I}$ which shows that
$\sqrt{I}$ is prime as needed.

(2)$\Rightarrow$(3) Suppose that $I$ is a quasi primary ideal of $R$. Then
$\sqrt{I}$ is prime. Since $R$ is zero-dimensional, $\sqrt{I}$ is a maximal
ideal and clearly $I=P^{n}$ for some prime ideal $P$ of $R$ and some positive
number $n$.

(3)$\Rightarrow$(4) Suppose that $I=P^{n}$ for some prime ideal $P$ of $R$ and
some positive integer $n$. Then $\sqrt{I}=P$ is also a maximal ideal. Hence
our assumption $I\subseteq J(R)$ implies that $\sqrt{I}=P=J(R)$ and so
$(R,\sqrt{I})$ is a quasi-local ring.

(4)$\Rightarrow$(1) It follows directly by Theorem \ref{quasi}.
\end{proof}

Since every principal ideal ring is zero-dimensional, we have the following
corollary of Theorem \ref{zero}.

\begin{corollary}
Let $R$ be a principal ideal ring and $I$ be a proper ideal of $R$. Then $I$
is a quasi $J$-ideal of $R$ if and only if $I=p^{n}R$ for some prime element
$p$ of $R$ with $p\in J(R)$ and $n\geq1$.
\end{corollary}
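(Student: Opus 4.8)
The plan is to deduce this from Theorem \ref{zero} by noting that a principal ideal ring is zero-dimensional, so the equivalences there are available, and then translating condition (3) of that theorem into the concrete form involving a prime element $p$ with $p\in J(R)$. First I would observe that if $I$ is a quasi $J$-ideal of $R$, then by Proposition \ref{J(I)} we have $I\subseteq J(R)$, so the hypothesis $I\subseteq J(R)$ of Theorem \ref{zero} is automatically satisfied and we may apply it. Hence $I$ is a quasi $J$-ideal if and only if $I=P^{n}$ for some prime ideal $P$ and some $n\geq 1$ with, moreover, $\sqrt{I}=P=J(R)$ (this last equality comes out of the proof of (3)$\Rightarrow$(4), or directly from $I\subseteq J(R)$ together with $P$ maximal).

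Next I would use that in a principal ideal ring every prime ideal is principal, say $P=pR$; here $p$ is a prime element precisely because $pR$ is a nonzero prime ideal, and if $P=0$ then $I=0=0^{n}$ and one checks the statement degenerates appropriately (in a PIR with $0$ prime, i.e.\ a domain, $0$ is a quasi $J$-ideal iff $J(R)=0$, and one must be slightly careful about whether to regard $0$ as $p^{n}R$; I would simply treat the nonzero case as the main content and dispose of $I=0$ by remarking $0=\sqrt{0}=J(R)$ forces the zero ideal to be the unique quasi $J$-ideal, matching a ``degenerate'' reading, or restrict attention to $I\neq 0$). For the nonzero case, $I=P^{n}=(pR)^{n}=p^{n}R$, and $p\in P=J(R)$; conversely, if $I=p^{n}R$ with $p$ a prime element lying in $J(R)$, then $\sqrt{I}=pR$ is a prime ideal contained in $J(R)$, hence a maximal ideal equal to $J(R)$, so $(R,\sqrt{I})$ is quasi-local and condition (4) of Theorem \ref{zero} gives that $I$ is a quasi $J$-ideal.

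The main obstacle, such as it is, is bookkeeping around the edge case $I=0$ and making sure the phrase ``prime element $p$'' is interpreted so that the biconditional is literally correct in a domain; the substantive mathematics is entirely carried by Theorem \ref{zero}, so the proof is short. I would write: ``Since $R$ is a principal ideal ring, it is zero-dimensional, and by Proposition \ref{J(I)} any quasi $J$-ideal of $R$ is contained in $J(R)$; thus Theorem \ref{zero} applies. If $I$ is a quasi $J$-ideal, then $I=P^{n}$ with $P$ prime and $\sqrt{I}=P=J(R)$; writing $P=pR$ exhibits $p$ as a prime element with $p\in J(R)$ and $I=p^{n}R$. Conversely, if $I=p^{n}R$ for a prime element $p\in J(R)$, then $\sqrt{I}=pR$ is maximal and contained in $J(R)$, so $pR=J(R)$ and $(R,\sqrt{I})$ is quasi-local; by Theorem \ref{zero}, $I$ is a quasi $J$-ideal.''
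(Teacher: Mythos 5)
Your argument is essentially the paper's: the corollary is presented there as an immediate consequence of Theorem \ref{zero}, justified only by the remark that a principal ideal ring is zero-dimensional, and your write-up simply makes explicit the bookkeeping the paper leaves implicit (that $I\subseteq J(R)$ holds automatically by Proposition \ref{J(I)}, and that $P^{n}=(pR)^{n}=p^{n}R$). Your caveat about the case $I=0$ (and, implicitly, about whether every principal ideal ring is really zero-dimensional --- $\mathbb{Z}$ is a principal ideal ring of dimension one in which $0$ is a quasi $J$-ideal not of the form $p^{n}R$ for a prime element $p\in J(R)$) is legitimate, but it is a defect of the corollary as stated rather than of your proof, which follows the paper's route exactly.
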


Let $I$ be a proper ideal of $R$. Then $I$ is said to be superfluous if
whenever $K$ is an ideal of $R$ such that $I+K=R$, then $K=R.$

\begin{proposition}
If $I$ is a quasi $J$-ideal of a ring $R$, then $I$ is superfluous.
\end{proposition}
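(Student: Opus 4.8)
The plan is to reduce the statement to the elementary fact that any ideal contained in the Jacobson radical is superfluous, together with the containment—already available to us—that every quasi $J$-ideal lies inside $J(R)$.

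First I would record that $I\subseteq J(R)$. This is immediate from Proposition \ref{J(I)}, whose condition (2) includes exactly this containment; alternatively, since $\sqrt{I}$ is a $J$-ideal, \cite[Proposition 2.2]{Hani} gives $I\subseteq\sqrt{I}\subseteq J(R)$. Then let $K$ be an arbitrary ideal of $R$ with $I+K=R$, and suppose toward a contradiction that $K$ is proper. Since $R$ has identity, $K$ is contained in some maximal ideal $M$ of $R$. But $I\subseteq J(R)\subseteq M$, so $R=I+K\subseteq M$, contradicting the properness of $M$. Hence $K=R$, which is precisely the assertion that $I$ is superfluous.

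I do not expect any real obstacle here: the whole content is the containment $I\subseteq J(R)$ furnished by Proposition \ref{J(I)}, after which the argument is the classical one showing that $J(R)$ (and hence any subideal of it) is a small ideal. The only point needing a moment's care is that one must pass from a proper $K$ to a maximal ideal containing it, which quietly uses the standing hypothesis that $R$ has a nonzero identity.
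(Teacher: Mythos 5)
Your proof is correct, but it takes a different route from the one in the paper. You extract the containment $I\subseteq J(R)$ (via Proposition \ref{J(I)}, or equivalently from $\sqrt{I}\subseteq J(R)$) and then run the classical maximal-ideal argument showing that \emph{any} ideal contained in the Jacobson radical is superfluous; this is elementary, self-contained, and in fact proves a more general statement from which the proposition follows at once. The paper instead passes to radicals: from $I+K=R$ it deduces $\sqrt{I}+\sqrt{K}=R$, then invokes the cited result that the $J$-ideal $\sqrt{I}$ is superfluous (so $\sqrt{K}=R$, hence $K=R$). The paper's argument leans on the machinery already developed for $J$-ideals and keeps the proof one line long; yours trades that citation for a standard first-principles argument and makes transparent that the only property of quasi $J$-ideals being used is their containment in $J(R)$. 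Both are complete and correct.
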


\begin{proof}
Suppose that $I+K=R$ for some ideal $K$ of $R$. Then $\sqrt{I}+\sqrt{K}%
=\sqrt{I+K}=R.$ From \cite[Proposition 2.9]{Hani}, we conclude that $\sqrt
{K}=R$ which means $K=R$ and we are done.
\end{proof}

\begin{proposition}
\begin{enumerate}
\item If $I_{1},I_{2},...,I_{k}$ are quasi $J$-ideals of a ring $R$, then $%
{\displaystyle\bigcap\limits_{i=1}^{k}}
I_{i}$ is a quasi $J$-ideal of $R$.

\item Let $I_{1},I_{2},...,I_{k}$ be quasi primary ideals of a ring $R$ in
which their radicals are not comparable. If $%
{\displaystyle\bigcap\limits_{i=1}^{k}}
I_{i}$ is a quasi $J$-ideal of $R$, then $I_{i}$ is a quasi $J$-ideal of $R$
for $i=1,2,...,k$.
\end{enumerate}
\end{proposition}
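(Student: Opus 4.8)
The plan for part (1) is to apply the characterization in Theorem \ref{eq}. First note that $\bigcap_{i=1}^{k} I_i$ is proper, being contained in the proper ideal $I_1$. Now suppose $a,b\in R$ with $ab\in\bigcap_{i=1}^{k} I_i$ and $a\notin J(R)$. For each $i$ we have $ab\in I_i$, so since $I_i$ is a quasi $J$-ideal, Theorem \ref{eq} gives $b\in\sqrt{I_i}$. Hence $b\in\bigcap_{i=1}^{k}\sqrt{I_i}=\sqrt{\bigcap_{i=1}^{k} I_i}$, and a second appeal to Theorem \ref{eq} shows $\bigcap_{i=1}^{k} I_i$ is a quasi $J$-ideal. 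I expect no real difficulty here.

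For part (2) I would write $P_i=\sqrt{I_i}$, which is prime by the definition of a quasi primary ideal. The first observation is that, because $P_i$ is prime, $I_i$ is a quasi $J$-ideal (i.e.\ $P_i$ is a $J$-ideal) if and only if $P_i\subseteq J(R)$: one implication is \cite[Proposition 2.2]{Hani}, and the converse is immediate, since if $P_i\subseteq J(R)$ and $ab\in P_i$ with $a\notin J(R)$ then $a\notin P_i$, whence $b\in P_i$ by primeness. So it suffices to prove $P_i\subseteq J(R)$ for each $i$.

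The key point is to use the incomparability of the radicals to produce a suitable multiplier. Fix $i$. Then $\bigcap_{j\neq i} P_j\nsubseteq P_i$, for otherwise $\prod_{j\neq i} P_j\subseteq P_i$ and primeness of $P_i$ would force $P_j\subseteq P_i$ for some $j\neq i$, contradicting the assumed incomparability. Choose $x_i\in\bigl(\bigcap_{j\neq i} P_j\bigr)\setminus P_i$. Now take any $a\in P_i$ and suppose, for a contradiction, that $a\notin J(R)$. Then $ax_i\in P_i$ (as $a\in P_i$) and $ax_i\in P_j$ for all $j\neq i$ (as $x_i\in P_j$), so $ax_i\in\bigcap_{j=1}^{k} P_j=\sqrt{\bigcap_{j=1}^{k} I_j}$, which is a $J$-ideal because $\bigcap_{j=1}^{k} I_j$ is a quasi $J$-ideal. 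Since $a\notin J(R)$, the $J$-ideal property yields $x_i\in\bigcap_{j=1}^{k} P_j\subseteq P_i$, contradicting the choice of $x_i$. Hence $a\in J(R)$, so $P_i\subseteq J(R)$, and therefore $I_i$ is a quasi $J$-ideal.

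The main obstacle is spotting the right element to multiply by — the element $x_i$ extracted from the incomparability hypothesis — together with the realization that it is enough to establish $P_i\subseteq J(R)$ because $P_i$ is already prime; after that the argument is a one-line computation. The only routine verifications are that a finite intersection of proper ideals is proper and that $\sqrt{\cdot}$ commutes with finite intersections.
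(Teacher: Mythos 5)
Your proof is correct. Part (1) matches the paper's in substance: the paper simply notes that $\sqrt{\bigcap_{i=1}^{k} I_i}=\bigcap_{i=1}^{k}\sqrt{I_i}$ and cites the fact that a finite intersection of $J$-ideals is a $J$-ideal, while you unwind the same fact through Theorem \ref{eq}; the two are interchangeable. In part (2) the engine is the same as the paper's --- incomparability of the radicals produces a multiplier lying in the other ideals but outside $\sqrt{I_i}$, and the $J$-ideal property of the intersection together with primeness of $\sqrt{I_i}$ finishes --- but the packaging differs. The paper verifies the quasi $J$-ideal condition for $I_1$ directly: given $ab\in I_1$ with $a\notin J(R)$, it picks $c\in\bigl(\prod_{j\geq 2}I_j\bigr)\setminus\sqrt{I_1}$, notes $abc\in\bigcap_j I_j$, deduces $bc\in\sqrt{I_1}$, and cancels $c$ by primeness. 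You instead first reduce to showing $\sqrt{I_i}\subseteq J(R)$, via the observation that a prime ideal is a $J$-ideal precisely when it is contained in $J(R)$, and then run the multiplier argument (with $x_i$ drawn from $\bigcap_{j\neq i}\sqrt{I_j}$ rather than from $\prod_{j\neq i}I_j$) as a proof by contradiction. Your reduction is a nice clarification of why primeness is the operative hypothesis and isolates the containment $\sqrt{I_i}\subseteq J(R)$ as the only thing at stake; the paper's version is marginally more direct. Both are valid.
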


\begin{proof}
(1) Since $\sqrt{%
{\displaystyle\bigcap\limits_{i=1}^{k}}
I_{i}}=%
{\displaystyle\bigcap\limits_{i=1}^{k}}
\sqrt{I_{i}}$, the claim is clear by \cite[Proposition 2.25]{Hani}.

(2) Without loss of generality, we show that $I_{1}$ is a quasi $J$-ideal.
Suppose that $ab\in I_{1}$ and $a\notin J(R).$ By assumption, we can choose an
element $c\in\left(
{\displaystyle\prod\limits_{i=2}^{k}}
I_{i}\right)  \backslash\sqrt{I_{1}}$ and then we have $abc\in%
{\displaystyle\bigcap\limits_{i=1}^{k}}
I_{i}$. It follows that $bc\in\sqrt{%
{\displaystyle\bigcap\limits_{i=1}^{k}}
I_{i}}=%
{\displaystyle\bigcap\limits_{i=1}^{k}}
\sqrt{I_{i}}\subseteq\sqrt{I_{1}}$ as $%
{\displaystyle\bigcap\limits_{i=1}^{k}}
I_{i}$ is a quasi $J$-ideal. Since $I_{1}$ is quasi primary, $\sqrt{I_{1}}$ is
prime which implies that $b\in\sqrt{I_{1}}.$ Thus $I_{1}$ is a quasi $J$-ideal
of $R.$
\end{proof}

\begin{proposition}
\begin{enumerate}
\item Let $I_{1},I_{2},...,I_{k}$ be quasi $J$-ideals of a ring $R$. Then
$\prod\limits_{i=1}^{k}I_{i}$ is a quasi $J$-ideal of $R$.

\item Let $I_{1},I_{2},...,I_{k}$ be quasi primary ideals of $R$ in which
their radicals are not comparable. If $\prod\limits_{i=1}^{k}I_{i}$ is a quasi
$J$-ideal of $R$, then $I_{i}$ is a quasi $J$-ideal of $R$ for $i=1,2,...,k.$
\end{enumerate}
\end{proposition}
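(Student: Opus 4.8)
The plan is to reduce both parts to the corresponding statements about intersections, using the elementary fact that a product and an intersection of finitely many ideals always have the same radical, namely $\sqrt{\prod_{i=1}^{k}I_{i}}=\sqrt{\bigcap_{i=1}^{k}I_{i}}=\bigcap_{i=1}^{k}\sqrt{I_{i}}$. Since being a quasi $J$-ideal depends on an ideal only through its radical (by Theorem \ref{eq}, condition (4) is phrased entirely in terms of $\sqrt{I}$), and since the radical of $\prod I_{i}$ coincides with the radical of $\bigcap I_{i}$, part (1) follows immediately from part (1) of the previous proposition: $\prod_{i=1}^{k}I_{i}$ and $\bigcap_{i=1}^{k}I_{i}$ have the same radical, the latter is a quasi $J$-ideal, hence so is the former. (One should also remark that $\prod I_{i}$ is proper, which is automatic since $\prod I_{i}\subseteq I_{1}\subsetneq R$.)

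For part (2) I would argue exactly as in the proof of part (2) of the previous proposition. Suppose $\prod_{i=1}^{k}I_{i}$ is a quasi $J$-ideal; without loss of generality show $I_{1}$ is a quasi $J$-ideal. Take $a,b\in R$ with $ab\in I_{1}$ and $a\notin J(R)$. Because the radicals $\sqrt{I_{1}},\dots,\sqrt{I_{k}}$ are pairwise incomparable, for each $i\geq 2$ one can pick $c_{i}\in I_{i}\setminus\sqrt{I_{1}}$ (any element of $I_{i}$ not lying in $\sqrt{I_{1}}$; such exists since $\sqrt{I_{i}}\not\subseteq\sqrt{I_{1}}$ forces $I_{i}\not\subseteq\sqrt{I_{1}}$). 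Set $c=c_{2}c_{3}\cdots c_{k}$. Then $abc\in I_{1}I_{2}\cdots I_{k}=\prod_{i=1}^{k}I_{i}$, so by Theorem \ref{eq} and $a\notin J(R)$ we get $bc\in\sqrt{\prod_{i=1}^{k}I_{i}}=\bigcap_{i=1}^{k}\sqrt{I_{i}}\subseteq\sqrt{I_{1}}$. Since $I_{1}$ is quasi primary, $\sqrt{I_{1}}$ is prime, and $c\notin\sqrt{I_{1}}$ (being a product of elements none of which lies in the prime ideal $\sqrt{I_{1}}$), we conclude $b\in\sqrt{I_{1}}$. By Theorem \ref{eq} again, $I_{1}$ is a quasi $J$-ideal.

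The only mildly delicate point, and the one I would be careful to state explicitly, is why $c=c_{2}\cdots c_{k}\notin\sqrt{I_{1}}$: this uses that $\sqrt{I_{1}}$ is prime (guaranteed by the quasi primary hypothesis on $I_{1}$) together with $c_{i}\notin\sqrt{I_{1}}$ for each $i$. Everything else is bookkeeping. I expect no real obstacle here — the statement is essentially the "product version" of the previous proposition, and the proof is obtained by replacing $\bigcap$ with $\prod$ and invoking the radical identity at the two places where it matters. If one wants to be economical, part (1) need not even repeat an argument: it is a one-line corollary of the radical identity plus the previous proposition.
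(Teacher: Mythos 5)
Your proof is correct. For part (1) you take a slightly different route from the paper: you observe that being a quasi $J$-ideal depends on a proper ideal only through its radical, that $\sqrt{\prod_{i=1}^{k}I_{i}}=\sqrt{\bigcap_{i=1}^{k}I_{i}}=\bigcap_{i=1}^{k}\sqrt{I_{i}}$, and hence that the product case is an immediate corollary of the intersection case established in the preceding proposition. The paper instead runs the element argument from scratch: given $ab\in\prod_{i=1}^{k}I_{i}$ with $a\notin J(R)$ it deduces $b\in\sqrt{I_{i}}$ for each $i$, picks $n_{i}$ with $b^{n_{i}}\in I_{i}$, and concludes $b^{n_{1}+\dots+n_{k}}\in\prod_{i=1}^{k}I_{i}$, which is just an inline proof of the same radical identity; the two arguments have identical mathematical content and yours is the more economical packaging. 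For part (2) the paper writes only that the proof is similar to the intersection case; your written-out version is precisely that intended argument, and you correctly supply the two points left implicit there: incomparability of the radicals forces $I_{i}\nsubseteq\sqrt{I_{1}}$ for $i\geq 2$ (so the elements $c_{i}$ exist), and primeness of $\sqrt{I_{1}}$ (from $I_{1}$ being quasi primary) is what guarantees both that $c=c_{2}\cdots c_{k}\notin\sqrt{I_{1}}$ and that $bc\in\sqrt{I_{1}}$ yields $b\in\sqrt{I_{1}}$. No gaps.
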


\begin{proof}
(1) Let $a,b\in R$ such that $ab\in\prod\limits_{i=1}^{k}I_{i}$ and $a\notin
J(R)$. Then clearly for all $i=1,2,...,k$, $b\in\sqrt{I_{i}}$ since $I_{i}$ is
a quasi $J$-ideal of $R$. Now, for all $i$, there is an integer $n_{i}$ such
that $b^{n_{i}}\in I_{i}$. Thus, $b^{n_{1}+n_{2}+\cdots+n_{k}}\in
\prod\limits_{i=1}^{k}I_{i}$ and so $b\in\sqrt{\prod\limits_{i=1}^{k}I_{i}}$.
Therefore, $\prod\limits_{i=1}^{k}I_{i}$ is a quasi $J$-ideal.

(2) Similar to the proof of Proposition 5 (2).
\end{proof}

However, the $J$-ideal property can not pass to the product of ideals as can
be seen in the following example.

\begin{example}
Consider the ring $%
\mathbb{Z}
(+)%
\mathbb{Z}
_{2}$. Then $0(+)%
\mathbb{Z}
_{2}$ is a $J$-ideal since $0$ is a $J$-ideal of $%
\mathbb{Z}
$. But $(0(+)%
\mathbb{Z}
_{2})(0(+)%
\mathbb{Z}
_{2})=0(+)\overline{0}$ is not a $J$-ideal of $%
\mathbb{Z}
(+)%
\mathbb{Z}
_{2}$ since for example, $(2,\overline{0})(0,\overline{1})=(0,\overline{0})$
and $(2,\overline{0})\notin J(%
\mathbb{Z}
)(+)%
\mathbb{Z}
_{2}=J(%
\mathbb{Z}
(+)%
\mathbb{Z}
_{2})$ but $(0,\overline{1})\neq(0,\overline{0}).$
\end{example}

\begin{proposition}
\label{f}Let $R_{1}$ and $R_{2}$ be two rings and $f:R_{1}\rightarrow R_{2}$
be an epimorphism. Then the following statements hold:
\end{proposition}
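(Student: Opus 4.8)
The plan is to establish the two natural transfer statements for an epimorphism $f:R_{1}\rightarrow R_{2}$: that a quasi $J$-ideal of $R_{1}$ containing $\ker f$ is carried by $f$ to a quasi $J$-ideal of $R_{2}$, and, conversely, that the $f$-preimage of a quasi $J$-ideal of $R_{2}$ is a quasi $J$-ideal of $R_{1}$ (the latter under a side hypothesis such as $\ker f\subseteq J(R_{1})$, which, as the example below shows, must be present for the converse to be true). The parallel assertions with ``quasi $J$-ideal'' replaced by ``$J$-ideal'' go through by the same argument. Throughout I would use the characterization of Theorem \ref{eq}, that a proper ideal $I$ of a ring $R$ is a quasi $J$-ideal exactly when $ab\in I$ (with $a,b\in R$) forces $a\in J(R)$ or $b\in\sqrt{I}$, together with two standard facts about epimorphisms: $f(J(R_{1}))\subseteq J(R_{2})$, and $f^{-1}(J(R_{2}))$ equals the intersection $J(\ker f)$ of all maximal ideals of $R_{1}$ containing $\ker f$ (since $R_{2}\cong R_{1}/\ker f$ and the maximal ideals of $R_{1}/\ker f$ correspond to the maximal ideals of $R_{1}$ containing $\ker f$); in particular $f^{-1}(J(R_{2}))=J(R_{1})$ precisely when $\ker f\subseteq J(R_{1})$.

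For the forward direction, let $I$ be a quasi $J$-ideal of $R_{1}$ with $\ker f\subseteq I$. First, $f(I)$ is proper, since $1\in f(I)$ would give $1\in I+\ker f=I$. Given $a',b'\in R_{2}$ with $a'b'\in f(I)$, surjectivity lets me write $a'=f(a)$ and $b'=f(b)$, so $f(ab)\in f(I)$ and hence $ab\in I+\ker f=I$. By Theorem \ref{eq} applied in $R_{1}$, either $a\in J(R_{1})$, whence $a'=f(a)\in f(J(R_{1}))\subseteq J(R_{2})$, or $b\in\sqrt{I}$, whence $b^{n}\in I$ for some $n$ and so $(b')^{n}=f(b^{n})\in f(I)$, i.e. $b'\in\sqrt{f(I)}$. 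Theorem \ref{eq} then gives that $f(I)$ is a quasi $J$-ideal of $R_{2}$, and the $J$-ideal version is identical, reading $b\in I$ for $b\in\sqrt{I}$.

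For the converse, let $I$ be a quasi $J$-ideal of $R_{2}$ and put $I'=f^{-1}(I)$, which is proper since $1\notin I$. I would first record $\sqrt{I'}=f^{-1}(\sqrt{I})$ (because $a^{n}\in I'\iff f(a)^{n}\in I$). Now if $ab\in I'$ then $f(a)f(b)\in I$, so by Theorem \ref{eq} in $R_{2}$ either $f(b)\in\sqrt{I}$, giving $b\in f^{-1}(\sqrt{I})=\sqrt{I'}$, or $f(a)\in J(R_{2})$, giving $a\in f^{-1}(J(R_{2}))=J(\ker f)$. In the second case the hypothesis $\ker f\subseteq J(R_{1})$ forces every maximal ideal of $R_{1}$ to contain $\ker f$, whence $J(\ker f)=J(R_{1})$ and $a\in J(R_{1})$; Theorem \ref{eq} then shows $I'$ is a quasi $J$-ideal of $R_{1}$ (and likewise for $J$-ideals). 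One can also read off the ``only if'' halves of any biconditional version from $f^{-1}(f(I))=I+\ker f=I$ and the two computations above.

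The step I expect to be the real obstacle is exactly this control of the Jacobson radical along $f$. The inclusion $J(R_{1})\subseteq f^{-1}(J(R_{2}))=J(\ker f)$ always holds but is generally strict, and without a hypothesis forcing equality the converse direction genuinely fails: for the natural surjection $\mathbb{Z}\rightarrow\mathbb{Z}/4\mathbb{Z}$, the ideal $0$ is a quasi $J$-ideal of $\mathbb{Z}/4\mathbb{Z}$ (its radical is the maximal ideal of the local ring $\mathbb{Z}/4\mathbb{Z}$, hence a $J$-ideal), yet its preimage $4\mathbb{Z}$ has radical $2\mathbb{Z}\not\subseteq J(\mathbb{Z})=0$, so it is not a quasi $J$-ideal of $\mathbb{Z}$. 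Thus the argument really amounts to tracking which maximal ideals survive under $f$; once that is pinned down, the remaining checks (properness, compatibility of radicals with $f$ and $f^{-1}$, surjectivity bookkeeping) are routine.
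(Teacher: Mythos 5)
Your argument is correct, and it reaches both halves of the proposition by a route that differs from the paper's. The paper does not verify the element-wise condition for $f(I_{1})$ and $f^{-1}(I_{2})$ from scratch; instead it first invokes the earlier result that an epimorphic image (resp.\ preimage) of a $J$-ideal is a $J$-ideal \cite[Proposition 2.23]{Hani}, applies that to the $J$-ideal $\sqrt{I_{1}}$ (resp.\ $\sqrt{I_{2}}$), and then shows that $\sqrt{f(I_{1})}$ is a $J$-ideal by passing through the inclusion $f(I_{1})\subseteq f(\sqrt{I_{1}})\subseteq\sqrt{f(I_{1})}$ (and symmetrically with $f^{-1}$). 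You instead work directly with the characterization in Theorem \ref{eq}: lift $a'b'\in f(I_{1})$ to $ab\in I_{1}+\ker f=I_{1}$, split into the two cases of Theorem \ref{eq}, and push each conclusion forward using $f(J(R_{1}))\subseteq J(R_{2})$ and $f(\sqrt{I_{1}})\subseteq\sqrt{f(I_{1})}$; the converse uses $\sqrt{f^{-1}(I_{2})}=f^{-1}(\sqrt{I_{2}})$ and the observation that $\ker f\subseteq J(R_{1})$ forces $f^{-1}(J(R_{2}))=J(R_{1})$. What your approach buys is self-containedness (no reliance on the cited proposition about $J$-ideals) and an explicit isolation of the only non-routine point, namely the behaviour of the Jacobson radical along $f$; your $\mathbb{Z}\rightarrow\mathbb{Z}/4\mathbb{Z}$ example showing the necessity of $\ker f\subseteq J(R_{1})$ is a worthwhile addition the paper does not include. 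What the paper's route buys is brevity and reuse of the already-established machinery for $J$-ideals. Both proofs are sound; the only small point to keep explicit in yours is the justification of $f(J(R_{1}))\subseteq J(R_{2})$ for surjective $f$ (immediate from $1-sf(x)=f(1-rx)$ with $s=f(r)$), which you assert as a standard fact.
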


\begin{enumerate}
\item If $I_{1}$ is a quasi $J$-ideal of $R_{1}$ with $K\operatorname{erf}%
\subseteq I_{1}$, then $f(I_{1})$ is a quasi $J$-ideal of $R_{2}.$

\item If $I_{2}$ is a quasi $J$-ideal of $R_{2}$ and $K\operatorname{erf}%
\subseteq J(R)$, then $f^{-1}(I_{2})$ is a quasi $J$-ideal of $R_{1}.$
\end{enumerate}

\begin{proof}
(1) Suppose that $I_{1}$ is a quasi $J$-ideal of $R_{1}.$ Since $\sqrt{I_{1}}$
is a $J$-ideal of $R_{1}$ and $K\operatorname{erf}\subseteq I_{1}%
\subseteq\sqrt{I_{1}}$, then $f(\sqrt{I_{1}})$ is a $J$-ideal of $R_{2}$ by
\cite[Proposition 2.23]{Hani}. Now, if $a,b\in R_{2}$ such that $ab\in
\sqrt{f(I_{1})}$ and $a\notin J(R_{2})$, then $a^{n}b^{n}\in f(I_{1})\subseteq
f(\sqrt{I_{1}})$ for some integer $n$. Since $a^{n}\notin J(R_{2})$, then
$b^{n}\in f(\sqrt{I_{1}})\subseteq\sqrt{f(I_{1})}$. Therefore, $b\in
\sqrt{f(I_{1})}$ and $\sqrt{f(I_{1})}$ is a $J$-ideal of $R_{2}$. So,
$f(I_{1})$ is a quasi $J$-ideal of $R_{2}.$

(2) Suppose that $I_{2}$ is a quasi $J$-ideal of $R_{2}.$ Since $\sqrt{I_{2}}$
is a $J$-ideal of $R_{2}$ and $K\operatorname{erf}\subseteq J(R)$, then
$f^{-1}(\sqrt{I_{2}})$ is a $J$-ideal of $R_{1}$ by \cite[Proposition
2.23]{Hani}. Now, let $x,y\in R_{1}$ such that $xy\in\sqrt{f^{-1}(I_{2})}$ and
$x\notin J(R_{1})$. Then $x^{m}y^{m}\in f^{-1}(I_{2})\subseteq f^{-1}%
(\sqrt{I_{2}})$ for some integer $m$. But $x^{m}\notin J(R_{1})$ implies that
$y^{m}\in f^{-1}(\sqrt{I_{2}})\subseteq\sqrt{f^{-1}(I_{2})}$. It follows that
$y\in\sqrt{f^{-1}(I_{2})};$ and so $\sqrt{f^{-1}(I_{2})}$ is a $J$-ideal of
$R_{1}$.
\end{proof}

\begin{corollary}
Let $I$ and $K$ be proper ideals of $R$ with $K\subseteq I$. If $I$ is a quasi
$J$-ideal of $R$, then $I/K$ is a quasi $J$-ideal of $R/K$.
\end{corollary}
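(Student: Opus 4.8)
The plan is to deduce this directly from Proposition \ref{f}(1) by choosing the right epimorphism. Let $\pi\colon R\rightarrow R/K$ denote the canonical surjection $r\mapsto r+K$. This is a ring epimorphism with $\operatorname{Ker}\pi=K$, and by hypothesis $K\subseteq I$, so the containment $\operatorname{Ker}\pi\subseteq I$ needed to invoke Proposition \ref{f}(1) holds automatically.

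Next I would observe that $\pi(I)=I/K$, which is a proper ideal of $R/K$ since $I$ is a proper ideal of $R$ (if $I/K=R/K$, then $I=R$, a contradiction). Thus the remaining hypotheses of Proposition \ref{f}(1) are met: $I$ is a quasi $J$-ideal of $R$ with $\operatorname{Ker}\pi\subseteq I$. Applying that proposition yields that $\pi(I)=I/K$ is a quasi $J$-ideal of $R/K$, which is exactly the assertion.

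I do not anticipate any real obstacle here, since the statement is an immediate specialization of Proposition \ref{f}(1) to the quotient map; the only points requiring (trivial) verification are that $\operatorname{Ker}\pi=K\subseteq I$ and that $I/K$ remains proper. Alternatively, one could argue directly using Theorem \ref{eq}(4): given $\bar a\bar b\in I/K$ in $R/K$, lift to $a,b\in R$ with $ab\in I$, use that $I$ is a quasi $J$-ideal to get $a\in J(R)$ or $b\in\sqrt I$, and then pass to $R/K$ using $J(R)/K\subseteq J(R/K)$ and $\sqrt I /K\subseteq\sqrt{I/K}$; but routing through Proposition \ref{f} is cleaner and avoids re-proving these standard facts.
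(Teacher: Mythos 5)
Your proposal is correct and follows exactly the paper's own argument: apply Proposition \ref{f}(1) to the canonical epimorphism $\pi\colon R\rightarrow R/K$, noting $\operatorname{Ker}\pi=K\subseteq I$, to conclude that $\pi(I)=I/K$ is a quasi $J$-ideal of $R/K$. The extra remarks (properness of $I/K$, the alternative direct argument via Theorem \ref{eq}) are fine but not needed.
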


\begin{proof}
Consider the natural epimorphism $\pi:R\rightarrow R/K$ with $Ker(\pi
)=K\subseteq I.$ By Proposition \ref{f}, $\pi(I)=I/K$ is a quasi $J$-ideal of
$R/K$.
\end{proof}

Let $I$ be a proper ideal of $R.$ In the following, the notation $Z_{I}%
(R)$\ denotes the set of $\{r\in R|rs\in I$ for some $s\in R\backslash I\}.$

\begin{proposition}
\label{S}Let $S$ be a multiplicatively closed subset of a ring $R$ such that
$J(S^{-1}R)=S^{-1}J(R)$. Then the following hold:
\end{proposition}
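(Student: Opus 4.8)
The plan is to run everything through the criterion of Theorem~\ref{eq}(4), fed by the two facts recalled in the introduction: $\sqrt{S^{-1}I}=S^{-1}\sqrt{I}$ for any ideal $I$, and the standing hypothesis $J(S^{-1}R)=S^{-1}J(R)$. With these in hand each part reduces to elementary manipulation of fractions, so that no radical nor Jacobson radical of $S^{-1}R$ ever has to be computed directly.

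\emph{Forward direction} (a quasi $J$-ideal $I$ of $R$ with $S\cap I=\emptyset$ pushes forward to a quasi $J$-ideal $S^{-1}I$ of $S^{-1}R$). Properness of $S^{-1}I$ is immediate from $S\cap I=\emptyset$. Take $\frac{a}{s},\frac{b}{t}\in S^{-1}R$ with $\frac{a}{s}\cdot\frac{b}{t}\in S^{-1}I$ and $\frac{a}{s}\notin J(S^{-1}R)$; clearing denominators produces $u\in S$ with $uab\in I$, and $\frac{a}{s}\notin J(S^{-1}R)=S^{-1}J(R)$ forces $a\notin J(R)$ (this is where the hypothesis on $J(S^{-1}R)$ enters). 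Reading $uab\in I$ as $a\cdot(ub)\in I$, Theorem~\ref{eq}(4) gives $ub\in\sqrt{I}$, say $u^{n}b^{n}\in I$; since $\frac{u^{n}}{1}$ is a unit of $S^{-1}R$ this yields $\frac{b^{n}}{1}\in S^{-1}I$, hence $\frac{b}{t}\in\sqrt{S^{-1}I}$, and a second appeal to Theorem~\ref{eq}(4) shows $S^{-1}I$ is a quasi $J$-ideal.

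\emph{Converse direction} (if $S^{-1}I$ is a quasi $J$-ideal of $S^{-1}R$ and $S$ meets neither $I$ nor $Z_{\sqrt{I}}(R)$, then $I$ is a quasi $J$-ideal of $R$). Given $a,b\in R$ with $ab\in I$ and $b\notin\sqrt{I}$, I aim at $a\in J(R)$. First, $\frac{b}{1}\notin\sqrt{S^{-1}I}=S^{-1}\sqrt{I}$, for if $vb\in\sqrt{I}$ with $v\in S$ then $b\notin\sqrt{I}$ puts $v$ in $Z_{\sqrt{I}}(R)$, against the hypothesis. Since $\frac{a}{1}\cdot\frac{b}{1}\in S^{-1}I$, Theorem~\ref{eq}(4) applied inside $S^{-1}R$ forces $\frac{a}{1}\in J(S^{-1}R)=S^{-1}J(R)$, hence $wa\in J(R)$ for some $w\in S$. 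The step I expect to be the main obstacle is the descent from $wa\in J(R)$ to $a\in J(R)$: since $J(R)$ need not be prime this is not free, and it is precisely the role of the disjointness condition on $S$ to make it go through (this is why such a condition appears in the statement; if in addition one knows or assumes $I\subseteq J(R)$, then $\sqrt{I}\subseteq J(R)$ and the descent can be extracted from $S\cap Z_{\sqrt I}(R)=\emptyset$). Granting this, $a\in J(R)$, so $I$ is a quasi $J$-ideal of $R$ by Theorem~\ref{eq}(4); any remaining item of the proposition, such as a statement about $S^{-1}I\cap R$ or the specialization $S=R\setminus M$, follows from the same fraction bookkeeping combined with Proposition~\ref{J(I)}.
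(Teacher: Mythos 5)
Your forward direction is correct and, up to being written out element by element, is the same argument as the paper's: the paper simply applies the localization result for $J$-ideals (\cite[Proposition 2.26]{Hani}) to $\sqrt{I}$ together with $\sqrt{S^{-1}I}=S^{-1}\sqrt{I}$, while you verify condition (4) of Theorem~\ref{eq} directly; both routes work, and you correctly isolate where the hypothesis $J(S^{-1}R)=S^{-1}J(R)$ enters.

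The converse direction, however, has a genuine gap, and it sits exactly at the step you yourself flag and then write ``granting this'': the descent from $wa\in J(R)$ (for some $w\in S$) to $a\in J(R)$. The substitute hypothesis you propose, $S\cap Z_{\sqrt{I}}(R)=\emptyset$ (even combined with $I\subseteq J(R)$), cannot close it: from $wa\in J(R)$ with $a\notin J(R)$ all you learn is that $w$ multiplies a non-element of $J(R)$ into $J(R)$, which has no bearing on $\sqrt{I}$. The actual hypothesis of part (2) is $S\cap Z_{I}(R)=S\cap Z_{J(R)}(R)=\emptyset$, where $Z_{K}(R)=\{r\in R: rs\in K \text{ for some } s\in R\setminus K\}$, and with it the descent is one line: $\frac{a}{1}\in S^{-1}J(R)$ yields $v\in S$ and $m$ with $v^{m}a^{m}\in J(R)$; since $v^{m}\in S$ and $S\cap Z_{J(R)}(R)=\emptyset$, necessarily $a^{m}\in J(R)$, and then $a\in J(R)$ because $1-ra$ divides the unit $1-(ra)^{m}$ for every $r$. (Analogously, your $b$-step should invoke $Z_{I}(R)$ rather than $Z_{\sqrt{I}}(R)$: from $u^{n}b^{n}\in I$ with $u^{n}\in S\setminus Z_{I}(R)$ one gets $b^{n}\in I$ directly.) So while the skeleton of your converse matches the paper's, the one step you identify as the main obstacle is precisely the step left unproved, and the disjointness condition you guessed is not the one that makes it go through.
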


\begin{enumerate}
\item If $I$ is a quasi $J$-ideal of $R$ such that $I\cap S=\emptyset$, then
$S^{-1}I$ is a quasi $J$-ideal of $S^{-1}R.$

\item If $S^{-1}I$ is a quasi $J$-ideal of $S^{-1}R$ and $S\cap Z_{I}(R)=S\cap
Z_{J(R)}(R)=\emptyset$, then $I$ is a quasi $J$-ideal of $R.$
\end{enumerate}

\begin{proof}
(1)\ Suppose that $I$ is a quasi $J$-ideal of $R.$ Since $\sqrt{I}$ is a
$J$-ideal of $R$, then by \cite[Proposition 2.26]{Hani}, we conclude that
$\sqrt{S^{-1}I}=S^{-1}\sqrt{I}$ is a $J$-ideal of $R$ and we are done.

(2) Let $a,b\in R$ and $ab\in I$. Hence $\frac{a}{1}\frac{b}{1}\in S^{-1}I$.
Since $S^{-1}I$ is a quasi $J$-ideal of $S^{-1}R$, we have either $\frac{a}%
{1}\in J(S^{-1}(R))=S^{-1}J(R)$ or $\frac{b}{1}\in\sqrt{S^{-1}I}=S^{-1}%
\sqrt{I}$ by Theorem \ref{eq}$.$ If $\frac{b}{1}\in S^{-1}\sqrt{I}$, then
there exist $u\in S$ and a positive integer $n$ such that $u^{n}b^{n}\in I$.
Since $S\cap Z_{I}(R)=\emptyset,$ we conclude that $b^{n}\in I$ and so
$b\in\sqrt{I}.$ If $\frac{a}{1}\in S^{-1}J(R)$, then there exist $v\in S$ and
a positive integer $m$ such that $v^{m}a^{m}\in J(R)$. Since $S\cap
Z_{J(R)}(R)=\emptyset,$ we conclude that $a^{m}\in J(R)$ and so $a\in J(R).$
Therefore, $I$ is a quasi $J$-ideal of $R$ by Theorem \ref{eq}.
\end{proof}

Next, we justify that decomposable rings have no $J$-ideals.

\begin{remark}
\label{r}Let $R_{1}$ and $R_{2}$ be two rings and $R=R_{1}\times R_{2}$. Then
there are no quasi $J$-ideal in $R$. Indeed, for every proper ideal
$I_{1}\times I_{2}$ of $R$ we have $(1,0)(0,1)\in I_{1}\times I_{2}$ but
neither $(1,0)\in J(R)$ nor $(0,1)\in\sqrt{I_{1}\times I_{2}}=$ $\sqrt{I_{1}%
}\times\sqrt{I_{2}}$.
\end{remark}

\begin{lemma}
\label{pow}Let $I$ be an ideal of a Noetherian ring $R$. Then $\sqrt
{I[\left\vert x\right\vert ]}=\sqrt{I}[\left\vert x\right\vert ]$.
\end{lemma}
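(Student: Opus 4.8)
The plan is to establish the two inclusions separately; only the inclusion $\sqrt{I}[\left\vert x\right\vert ]\subseteq\sqrt{I[\left\vert x\right\vert ]}$ will use that $R$ is Noetherian. First I would record the harmless point that for any ideal $A$ of $R$ the set $A[\left\vert x\right\vert ]$ of power series all of whose coefficients lie in $A$ is an ideal of $R[\left\vert x\right\vert ]$, since every coefficient of a product is an $R$-combination of coefficients of the factors; so both $I[\left\vert x\right\vert ]$ and $\sqrt{I}[\left\vert x\right\vert ]$ make sense as ideals, as does $\sqrt{I[\left\vert x\right\vert ]}$.

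For $\sqrt{I}[\left\vert x\right\vert ]\subseteq\sqrt{I[\left\vert x\right\vert ]}$: since $R$ is Noetherian, $\sqrt{I}$ is finitely generated, hence there is a positive integer $N$ with $(\sqrt{I})^{N}\subseteq I$. If $f=\sum_{n\geq 0}a_{n}x^{n}$ has every $a_{n}\in\sqrt{I}$, then each coefficient of $f^{N}$ is a finite sum of products of $N$ of the $a_{n}$'s and so lies in $(\sqrt{I})^{N}\subseteq I$; thus $f^{N}\in I[\left\vert x\right\vert ]$ and $f\in\sqrt{I[\left\vert x\right\vert ]}$.

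For $\sqrt{I[\left\vert x\right\vert ]}\subseteq\sqrt{I}[\left\vert x\right\vert ]$: the key observation is that if $P$ is a prime ideal of $R$, then $P[\left\vert x\right\vert ]$ is a prime ideal of $R[\left\vert x\right\vert ]$, because reducing coefficients modulo $P$ gives a surjection $R[\left\vert x\right\vert ]\rightarrow(R/P)[\left\vert x\right\vert ]$ with kernel $P[\left\vert x\right\vert ]$, and a power series ring over a domain is a domain (compare lowest-degree terms). Hence for every prime $P$ with $I\subseteq P$ we have $I[\left\vert x\right\vert ]\subseteq P[\left\vert x\right\vert ]$, so $\sqrt{I[\left\vert x\right\vert ]}\subseteq P[\left\vert x\right\vert ]$. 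Intersecting over all such $P$ and using that a power series lies in $P[\left\vert x\right\vert ]$ exactly when all of its coefficients lie in $P$, we obtain $\sqrt{I[\left\vert x\right\vert ]}\subseteq\bigcap_{I\subseteq P}P[\left\vert x\right\vert ]=\bigl(\bigcap_{I\subseteq P}P\bigr)[\left\vert x\right\vert ]=\sqrt{I}[\left\vert x\right\vert ]$, the last equality being the standard description of $\sqrt{I}$ as the intersection of the primes containing it.

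The argument is short, and the only point requiring care is the step $(\sqrt{I})^{N}\subseteq I$: this is exactly where the Noetherian hypothesis enters, and it cannot be dropped, since over a non-Noetherian ring one can exhibit a power series with nilpotent coefficients that is not itself nilpotent. The second inclusion, by contrast, holds for arbitrary $R$. Thus there is no genuine obstacle beyond bookkeeping; the thing to be careful about is not to assert the first inclusion without finiteness of $\sqrt{I}$.
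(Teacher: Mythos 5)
Your proof is correct and complete. For comparison: the paper does not actually prove this lemma at all --- its ``proof'' is the single line ``See \cite{1}'', deferring to the Achraf--Ahmed--Ali paper on $2$-absorbing ideals in power series rings --- so yours is the only argument on the table, and it is a good one. The inclusion $\sqrt{I}[\left\vert x\right\vert ]\subseteq\sqrt{I[\left\vert x\right\vert ]}$ correctly isolates the Noetherian hypothesis via the standard fact that $(\sqrt{I})^{N}\subseteq I$ for some $N$ when $\sqrt{I}$ is finitely generated (worth a one-line justification, e.g.\ citing Atiyah--Macdonald, but entirely standard), and the coefficient computation for $f^{N}$ is right since each coefficient of $f^{N}$ is a finite sum of $N$-fold products of coefficients of $f$. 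The reverse inclusion via $R[\left\vert x\right\vert ]/P[\left\vert x\right\vert ]\cong(R/P)[\left\vert x\right\vert ]$ and the intersection of primes containing $I$ is also sound, and your observation that this direction needs no hypothesis on $R$, while the first direction genuinely fails for non-Noetherian rings (a power series with nilpotent coefficients need not be nilpotent), is exactly the right way to see where the hypothesis is used. No gaps.
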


\begin{proof}
See \cite{1}.
\end{proof}

\begin{proposition}
\label{pp}Let $I$ be a proper ideal of a Noetherian ring $R$. Then
$I[\left\vert x\right\vert ]$ is a quasi $J$-ideal of $R[|x|]$ if and only if
$I$ is a quasi $J$-ideal of $R$.
\end{proposition}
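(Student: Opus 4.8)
The plan is to reduce everything, via the definition of a quasi $J$-ideal together with Lemma \ref{pow}, to the corresponding statement about genuine $J$-ideals. By definition, $I[|x|]$ is a quasi $J$-ideal of $R[|x|]$ precisely when $\sqrt{I[|x|]}$ is a $J$-ideal of $R[|x|]$; since $R$ is Noetherian, Lemma \ref{pow} gives $\sqrt{I[|x|]}=\sqrt{I}[|x|]$, so this happens exactly when $\sqrt{I}[|x|]$ is a $J$-ideal of $R[|x|]$. On the other hand, $I$ is a quasi $J$-ideal of $R$ exactly when $\sqrt{I}$ is a $J$-ideal of $R$. Hence the proposition follows once we know, for the proper ideal $A=\sqrt{I}$ of $R$, that $A[|x|]$ is a $J$-ideal of $R[|x|]$ if and only if $A$ is a $J$-ideal of $R$. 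I would isolate and prove this as a separate lemma, valid for an arbitrary proper ideal $A$ of $R$ (for which no Noetherian hypothesis is needed).

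To prove that lemma I will use the standard description $J(R[|x|])=\{f=\sum_{n\ge 0}f_nx^n:f_0\in J(R)\}$, which follows from the fact that $f$ is a unit of $R[|x|]$ iff $f_0$ is a unit of $R$. For the direction ``$A$ a $J$-ideal of $R$ $\Rightarrow$ $A[|x|]$ a $J$-ideal of $R[|x|]$'': first note $A[|x|]$ is proper since $1\notin A$. Take $f=\sum a_ix^i$ and $g=\sum b_jx^j$ with $fg\in A[|x|]$ and $f\notin J(R[|x|])$, so $a_0\notin J(R)$. Each coefficient $\sum_{i+j=k}a_ib_j$ lies in $A$, and I argue by strong induction on $k$ that $b_k\in A$: for $k=0$ this is $a_0b_0\in A$ with $a_0\notin J(R)$, and in the inductive step the relation $a_0b_k+\sum_{i=1}^{k}a_ib_{k-i}\in A$ together with $b_0,\dots,b_{k-1}\in A$ forces $a_0b_k\in A$, hence $b_k\in A$ because $A$ is a $J$-ideal. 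Thus $g\in A[|x|]$. For the converse direction one restricts to constant power series: if $a,b\in R$ with $ab\in A$ and $a\notin J(R)$, then $ab\in A[|x|]$ and the constant series $a$ lies outside $J(R[|x|])$, so $b\in A[|x|]$, i.e. $b\in A$. (If \cite{Hani} already records the power-series statement for $J$-ideals, one may instead simply cite it here.)

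Combining these observations with Lemma \ref{pow} gives the chain of equivalences: $I[|x|]$ is a quasi $J$-ideal of $R[|x|]$ $\Leftrightarrow$ $\sqrt{I}[|x|]$ is a $J$-ideal of $R[|x|]$ $\Leftrightarrow$ $\sqrt{I}$ is a $J$-ideal of $R$ $\Leftrightarrow$ $I$ is a quasi $J$-ideal of $R$, which is the assertion.

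The step requiring the most care is the equivalence ``$A[|x|]$ is a $J$-ideal iff $A$ is'', which hinges on the correct form of $J(R[|x|])$ — genuinely different from the polynomial case, where $J(R[x])=N(R[x])$ — and on the fact that the Noetherian hypothesis is invoked exactly once, at Lemma \ref{pow}, to pass between $\sqrt{I[|x|]}$ and $\sqrt{I}[|x|]$; the coefficient-by-coefficient induction itself is routine.
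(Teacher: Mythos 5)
Your proof is correct and follows essentially the same route as the paper, which simply combines Lemma \ref{pow} with the power-series statement for $J$-ideals, cited as \cite[Proposition 2.18]{Hani}. The only difference is that you supply a (correct) self-contained coefficient-induction proof of that cited equivalence rather than invoking it, as you yourself anticipate in your parenthetical remark.
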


\begin{proof}
Follows by \cite[Proposition 2.18]{Hani} and Lemma \ref{pow}.
\end{proof}

\section{Quasi presimplifiable rings}

Recall that a ring $R$ is called presimplifiable if whenever $a,b\in R$ with
$a=ab$, then $a=0$ or $b\in U(R)$. This class of rings has been introduced by
Bouvier in \cite{Bouvier}. Then many of its properties are studied in
\cite{Anderson} and \cite{And2}. Among many other characterizations, it is
well known that $R$ is presimplifiable if and only if $Z(R)\subseteq J(R)$. As
a generalization of presimplifiable property, we introduce the following class
of rings.

\begin{definition}
A ring $R$ is called quasi presimplifiable if whenever $a,b\in R$ with $a=ab$,
then $a\in N(R)$ or $b\in U(R)$.
\end{definition}

It is clear that any presimplifiable ring $R$ is quasi presimplifiable and
that they coincide if $R$ is reduced. The following example shows that the
converse is not true in general.

\begin{example}
\label{exp}Let $R=%
\mathbb{Z}
(+)%
\mathbb{Z}
_{2}$ and let $(a,m_{1}),(b,m_{2})\in R$ such that $(a,m_{1})(b,m_{2}%
)=(a,m_{1})$ and $(a,m_{1})\notin N(R)=N(%
\mathbb{Z}
)(+)%
\mathbb{Z}
_{2}$. Then $ab=a$ with $a\notin N(R)$ and so we must have $b=1\in U(%
\mathbb{Z}
)$. It follows that $(b,m_{2})\in U(%
\mathbb{Z}
)(+)%
\mathbb{Z}
_{2}=U(%
\mathbb{Z}
(+)%
\mathbb{Z}
_{2})=U(R)$ and $R$ is quasi presimplifiable. On the other hand, $R$ is not
presimplifiable. For example $(0,\overline{1}),(3,\overline{1})\in R$ and
$(0,\overline{1})(3,\overline{1})=(0,\overline{1})$ but $(0,\overline
{1}),(3,\overline{1})\neq(0,\overline{0})$ and $(0,\overline{1}),(3,\overline
{1})\notin U(R)$.
\end{example}

A non-zero element $a$ in a ring $R$ is called quasi-regular if $Ann_{R}%
(a)\subseteq N(R)$. We denote the set of all elements of $R$ that are not
quasi-regular by $NZ(R)$. As a characterization of quasi presimplifiable
rings, we have the following.

\begin{proposition}
\label{q1}A ring $R$ is quasi presimplifiable if and only if $NZ(R)\subseteq
J(R)$.
\end{proposition}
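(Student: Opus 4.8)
The plan is to prove the two implications separately, in each case unwinding the definitions of $NZ(R)$ (the set of non-quasi-regular elements, i.e. those $a$ with $Ann_R(a)\nsubseteq N(R)$, together with $0$) and of the quasi presimplifiable property. First I would handle the forward direction: assume $R$ is quasi presimplifiable and take $a\in NZ(R)$; I want to show $a\in J(R)$. If $a=0$ this is trivial, so suppose $a\neq 0$ is not quasi-regular, meaning there is some $c\notin N(R)$ with $ac=0$. To reach $J(R)$ I would use the standard criterion $a\in J(R)$ iff $1-ra\in U(R)$ for every $r\in R$. So fix $r\in R$ and set $b=1-ra$; then $c b = c(1-ra) = c - r(ca) = c - 0 = c$, i.e. $cb = c$ with $c\notin N(R)$. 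Quasi presimplifiability forces $b\in U(R)$, and since $r$ was arbitrary this gives $a\in J(R)$.

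For the converse, assume $NZ(R)\subseteq J(R)$ and take $a,b\in R$ with $a=ab$; I must show $a\in N(R)$ or $b\in U(R)$. Rewrite $a=ab$ as $a(1-b)=0$. If $a\in N(R)$ we are done, so assume $a\notin N(R)$. Then $1-b$ annihilates $a$, and since $a\notin N(R)$ the annihilator $Ann_R(1-b)$ contains $a\notin N(R)$, so $Ann_R(1-b)\nsubseteq N(R)$; moreover $1-b\neq 0$ would follow unless $b=1$, in which case $b\in U(R)$ trivially. Assuming $b\neq 1$, this says $1-b\in NZ(R)\subseteq J(R)$, hence $1-(1-b)=b$ is a unit of $R$ by the characterization of the Jacobson radical. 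Either way $b\in U(R)$, as required.

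The only mildly delicate point — and the step I would be most careful with — is the bookkeeping around the element $0$ and the degenerate cases $b=1$ (or $1-b=0$) in the definition of $NZ(R)$: one must make sure that "$a$ is not quasi-regular" is being applied to a genuinely nonzero element with a non-nilpotent annihilating element, so that it actually lands in $NZ(R)$ as defined. Apart from that, both directions are essentially a one-line manipulation combined with the elementary fact that $x\in J(R)$ if and only if $1-rx$ is a unit for all $r\in R$ (which I would invoke without proof as standard; see \cite{Attiyah}). I would present the two implications as a short, self-contained paragraph each, with the above computations spelled out.
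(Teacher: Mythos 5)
Your proof is correct and follows essentially the same route as the paper's: in the forward direction you pick a non-nilpotent annihilator $c$ of $a$ and observe $c(1-ra)=c$ to force $1-ra\in U(R)$ (the paper phrases this by first noting $ra\in NZ(R)$, but the computation is identical), and the converse is the same rewriting $a(1-b)=0$ with $1-b\in NZ(R)\subseteq J(R)$. Your extra care with the degenerate cases $a=0$ and $b=1$ is a minor tidiness improvement over the paper's version, not a change of method.
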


\begin{proof}
Suppose $R$ is quasi presimplifiable, $a\in NZ(R)$ and $r\in R$. Then $ra\in
NZ(R)$ and so there exists $b\notin N(R)$ such that $rab=0$. Hence,
$(1-ra)b=b$ and so by assumption, $1-ra\in U(R)$. It follows that $a\in J(R)$
and so $NZ(R)\subseteq J(R)$. Conversely, suppose $NZ(R)\subseteq J(R)$ and
let $a,b\in R$ with $a=ab$. Then $a(1-b)=0$. If $a\in N(R)$, then we are done,
otherwise, $1-b\in NZ(R)\subseteq J(R)$. Therefore, $b\in U(R)$ as required.
\end{proof}

The main result of this section is to clarify the relationship between quasi
$J$-ideals (resp. $J$-ideals) and quasi presimplifiable (resp.
presimplifiable) rings.

\begin{theorem}
\label{p/}Let $I$ be a proper ideal of a ring $R$. Then
\end{theorem}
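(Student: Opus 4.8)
The plan is to prove both equivalences by transferring the problem to $R/I$ and matching the defining conditions of ($J$- and quasi $J$-) ideals with the internal characterizations of (presimplifiable and quasi presimplifiable) rings. Throughout I would use three facts: since $I\subseteq J(R)$, every maximal ideal of $R$ contains $I$, so $J(R/I)=J(R)/I$; the nilradical of the quotient is $N(R/I)=\sqrt{I}/I$; and, by the characterization quoted above applied to $R/I$ together with Proposition \ref{q1} applied to $R/I$, the ring $R/I$ is presimplifiable iff $Z(R/I)\subseteq J(R/I)$, while $R/I$ is quasi presimplifiable iff $NZ(R/I)\subseteq J(R/I)$.

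First I would handle the $J$-ideal statement. For $a\in R$ one has $\bar a\in Z(R/I)$ exactly when there is some $c\in R\setminus I$ with $ac\in I$ (the element $\bar 0$ is harmless, since $\bar 0\in J(R/I)$ in any case and $I$ proper guarantees such a $c$ exists). Hence the inclusion $Z(R/I)\subseteq J(R/I)=J(R)/I$ says precisely: whenever $ac\in I$ and $a\notin J(R)$, then $c\in I$, which is exactly the definition of $I$ being a $J$-ideal. Combining with ``presimplifiable $\iff Z\subseteq J$'' gives the first equivalence.

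For the quasi $J$-ideal statement I would argue in the same spirit, replacing $Z$ by $NZ$ and $I$ by $\sqrt{I}$. Using the description of non-quasi-regular elements recalled before Proposition \ref{q1}, one checks that $\bar a\in NZ(R/I)$ exactly when there is $c\in R\setminus\sqrt{I}$ with $ac\in I$ (for $\bar a\neq\bar 0$ this is just the failure of $Ann_{R/I}(\bar a)\subseteq N(R/I)$, and $\bar 0$ is again harmless). Thus $NZ(R/I)\subseteq J(R/I)=J(R)/I$ reads: whenever $ac\in I$ and $a\notin J(R)$, then $c\in\sqrt{I}$, which by Theorem \ref{eq} is exactly the statement that $I$ is a quasi $J$-ideal. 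Feeding this into Proposition \ref{q1} yields the second equivalence. (Alternatively, the backward direction can be done by hand: from $ab\in I$ and $a\notin J(R)$ pick $r\in R$ with $1-ar\notin U(R)$, so that in $R/I$ one gets $\bar b=\bar b\,\overline{1-ar}$ with $\overline{1-ar}\notin U(R/I)$, forcing $\bar b\in N(R/I)$.)

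I do not expect a genuine obstacle: the whole argument is a careful transcription of quantifiers, and the only place where the hypothesis $I\subseteq J(R)$ is actually used is the identification $J(R/I)=J(R)/I$, which is what lets one pass freely between ``$a\in J(R)$'' and ``$\bar a\in J(R/I)$''. The one point requiring a little care is checking that the degenerate element $\bar 0$ of $R/I$ does not break the translations of $Z(R/I)$ and $NZ(R/I)$, which it does not, since $\bar 0\in J(R/I)$ automatically and $a\in I\subseteq J(R)$ whenever $\bar a=\bar 0$.
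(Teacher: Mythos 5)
Your proposal is correct and follows essentially the same route as the paper: both arguments reduce the statement to the inclusions $Z(R/I)\subseteq J(R/I)$ and $NZ(R/I)\subseteq J(R/I)$ via the identifications $J(R/I)=J(R)/I$ and $N(R/I)=\sqrt{I}/I$, invoking the known characterization of presimplifiable rings and Proposition \ref{q1} for the quasi case. Your extra care with the element $\bar 0$ is a minor refinement of the same argument, not a different approach.
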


\begin{enumerate}
\item $I$ is a $J$-ideal of $R$ if and only if $I\subseteq J(R)$ and $R/I$ is presimplifiable.

\item $I$ is a quasi $J$-ideal of $R$ if and only if $I\subseteq J(R)$ and
$R/I$ is quasi presimplifiable.
\end{enumerate}

\begin{proof}
\begin{enumerate}
\item Suppose $I$ is a $J$-ideal of $R$. Then $I\subseteq J(R)$ by
\cite[Proposition 2.2]{Hani}. Now, let $a+I\in Z(R/I)$. Then there exists
$I\neq b+I\in R/I$ such that $(a+I)(b+I)=I$. Now, $ab\in I$ and $b\notin I$
imply that $a\in J(R)$ as $I$ is a $J$-ideal of $R$. Thus, $a+I\in
J(R)/I=J(R/I)$ and so $R/I$ is presimplifiable. Conversely, suppose $R/I$ is
presimplifiable and let $a,b\in R$ such that $ab\in I$ and $a\notin J(R)$.
Then $a+I\notin J(R)/I=J(R/I)$ and by assumption, $a+I\notin Z(R/I)$. As
$(a+I)(b+I)=I$, we conclude that $b+I=I$ and so $b\in I$ as needed.

\item Suppose $I$ is a quasi $J$-ideal of $R$ and note that $I\subseteq J(R)$
by Proposition \ref{J(I)}. Let $a+I\in NZ(R/I)$ and choose $b+I\notin N(R/I)$
such that $(a+I)(b+I)=I$. Then $ab\in I$ and $b\notin\sqrt{I}$ which imply
that $a\in J(R)$ as $I$ is a quasi $J$-ideal of $R$. Hence, $a+I\in
J(R)/I=J(R/I)$ and $R/I$ is quasi presimplifiable by Proposition \ref{q1}.
Conversely, suppose $R/I$ is quasi presimplifiable and let $a,b\in R$ such
that $ab\in I$ and $a\notin J(R)$. Then $a+I\notin J(R)/I=J(R/I)$ and so
$a+I\notin NZ(R/I)$. As $(a+I)(b+I)=I$, we must have $b+I\in N(R/I)$ and so
$b\in\sqrt{I}$. Therefore, $I$ is a quasi $J$-ideal.
\end{enumerate}
\end{proof}

In view of Theorem \ref{p/}, we deduce immediately the following
characterization of presimplifiable (resp. quasi presimplifiable) rings.

\begin{corollary}
A ring $R$ is presimplifiable (resp. quasi presimplifiable) if and only if $0$
is a $J$-ideal (resp. quasi $J$-ideal) of $R$.
\end{corollary}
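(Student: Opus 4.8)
The plan is to obtain this as the special case $I=0$ of Theorem \ref{p/}. First I would note that $0$ is a proper ideal of $R$: by the standing assumption $R$ has a nonzero identity, so $R\neq 0$ and hence $0\neq R$; thus Theorem \ref{p/} applies with $I=0$. Next I would observe that the hypothesis ``$I\subseteq J(R)$'' occurring in both parts of Theorem \ref{p/} is automatically satisfied when $I=0$, since $0\subseteq J(R)$ trivially. Finally, the quotient $R/0$ is canonically isomorphic to $R$, and under this isomorphism the Jacobson radical, the nilradical, the set of units, and the set of zero-divisors of $R/0$ correspond to the respective sets of $R$; hence $R/0$ is presimplifiable (resp. quasi presimplifiable) if and only if $R$ is presimplifiable (resp. quasi presimplifiable).

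Putting these remarks together finishes the argument. By Theorem \ref{p/}(1), $0$ is a $J$-ideal of $R$ if and only if $0\subseteq J(R)$ and $R/0$ is presimplifiable, which by the above reduces precisely to ``$R$ is presimplifiable''; and by Theorem \ref{p/}(2), $0$ is a quasi $J$-ideal of $R$ if and only if $0\subseteq J(R)$ and $R/0$ is quasi presimplifiable, which reduces to ``$R$ is quasi presimplifiable''. This yields both stated equivalences.

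Alternatively, one could argue directly without quotients: unpacking the definition, $0$ being a $J$-ideal says exactly that $ab=0$ with $a\notin J(R)$ forces $b=0$, i.e. (contrapositively) $Z(R)\subseteq J(R)$, which is the known characterization of presimplifiability; and, via Theorem \ref{eq}, $0$ being a quasi $J$-ideal says that $ab=0$ with $a\notin J(R)$ forces $b\in\sqrt{0}=N(R)$, i.e. $NZ(R)\subseteq J(R)$, which is precisely the characterization of quasi presimplifiability given in Proposition \ref{q1}. I do not expect any real obstacle here; the only point needing a moment's care is checking that the isomorphism $R/0\cong R$ transports the relevant subsets ($J$, $N$, $U$, $Z$, $NZ$) correctly, which is routine.
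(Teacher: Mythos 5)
Your proposal is correct and matches the paper's intent exactly: the paper states this corollary as an immediate consequence of Theorem \ref{p/} with $I=0$, and your verification that $0$ is proper, that $0\subseteq J(R)$ holds trivially, and that $R/0\cong R$ transports the relevant subsets is precisely the routine check being left implicit. Your alternative direct argument via $Z(R)\subseteq J(R)$ and Proposition \ref{q1} is also sound, but the main route is the one the paper takes.
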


Recall that a ring $R$ is said to be von Neumann regular if for every $a\in
R$, there exists an element $x\in R$ such that $a=a^{2}x$.

\begin{lemma}
\label{lvon}If $R$ is a quasi presimplifiable von Neumann regular ring, then
$R$ is a field.
\end{lemma}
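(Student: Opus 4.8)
The plan is to reduce the statement to the well-known fact that a von Neumann regular integral domain is a field, so the real content is showing that $R$ has no non-trivial zero-divisors.

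First I would record two standard consequences of von Neumann regularity. If $a\in R$ satisfies $a^{2}=0$, then writing $a=a^{2}x$ gives $a=0$; hence $N(R)=0$ and $R$ is reduced. Next, if $a\in J(R)$, choose $x\in R$ with $a=a^{2}x$; then $a(1-ax)=0$, and since $ax\in J(R)$ the element $1-ax$ is a unit, forcing $a=0$. Thus $J(R)=0$.

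Since $R$ is reduced, the classes of presimplifiable and quasi presimplifiable rings coincide (as noted right after the definition of quasi presimplifiable), so the hypothesis gives that $R$ is presimplifiable, i.e. $Z(R)\subseteq J(R)=0$. (Alternatively, Proposition \ref{q1} gives $NZ(R)\subseteq J(R)=0$, and because $N(R)=0$ this says $\mathrm{Ann}_{R}(a)=0$ for every nonzero $a$, again yielding $Z(R)=0$.) Hence $R$ is an integral domain.

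Finally, given a nonzero $a\in R$, pick $x$ with $a=a^{2}x$, so $a(1-ax)=0$; as $R$ is a domain and $a\neq0$ we get $ax=1$, so $a\in U(R)$. Therefore every nonzero element of $R$ is a unit and $R$ is a field. I do not expect a genuine obstacle here; the only point requiring care is invoking the reduced-case equivalence (or Proposition \ref{q1}) without circularity, since everything else is a direct manipulation of the defining identity $a=a^{2}x$.
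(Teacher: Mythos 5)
Your proof is correct, but it takes a longer route than the paper. The paper's argument is a one-step application of the definition: for a non-zero $a$, write $a=a^{2}x=a(ax)$; since von Neumann regular rings are reduced, $a\notin N(R)$, so quasi presimplifiability applied directly to the pair $(a,ax)$ gives $ax\in U(R)$, hence $a\in U(R)$. You instead first establish that $R$ is a semiprimitive integral domain -- proving $N(R)=0$, then $J(R)=0$ from $a(1-ax)=0$, then $Z(R)\subseteq J(R)=0$ via the reduced-case coincidence of presimplifiable and quasi presimplifiable (or via Proposition \ref{q1}) -- and only then return to $a=a^{2}x$ to conclude each non-zero element is a unit. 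Every step of yours is sound, and your care about invoking the reduced-case equivalence non-circularly is well placed; what the detour buys is the extra structural information that $R$ is a domain with trivial Jacobson radical, but none of that is needed: the paper's approach shows the defining condition of quasi presimplifiability can be fed the factorization $a=a(ax)$ immediately, which shortens the proof to two lines.
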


\begin{proof}
Let $a$ be a non-zero element of $R$. Since $R$ is von Neumann regular,
$a=a^{2}x$ for some element $x$ of $R$. Observe that $a\notin N(R)$ as every
von Neumann regular ring is reduced. Since $a=a(ax)$ and $R$ is quasi
presimplifiable, we conclude that $ax\in U(R)$ and so $a\in U(R)$. Thus, $R$
is a field.
\end{proof}

We call an ideal $I$ of a ring $R$ regular if $R/I$ is a von Neumann regular ring.

\begin{proposition}
Any regular quasi $J$-ideal in a ring $R$ is maximal.
\end{proposition}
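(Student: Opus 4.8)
The plan is to reduce the statement to the quotient ring $R/I$ and then invoke the two structural results already established, namely Theorem \ref{p/}(2) and Lemma \ref{lvon}. The point is that both hypotheses on $I$ — being a quasi $J$-ideal and being regular — are most naturally read as statements about $R/I$, and once translated they exactly match the hypotheses of Lemma \ref{lvon}.

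First I would use Theorem \ref{p/}(2): since $I$ is a quasi $J$-ideal of $R$, we have $I\subseteq J(R)$ and, more importantly, $R/I$ is a quasi presimplifiable ring. Next, by the definition of a regular ideal, $R/I$ is a von Neumann regular ring. Thus $R/I$ is simultaneously quasi presimplifiable and von Neumann regular, so Lemma \ref{lvon} applies and yields that $R/I$ is a field. Consequently $I$ is a maximal ideal of $R$, which is the desired conclusion.

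There is essentially no obstacle here beyond making the two translations explicit; the work has all been front-loaded into Theorem \ref{p/} and Lemma \ref{lvon}. The only thing worth a sentence of care is confirming that "regular ideal" was defined precisely as "$R/I$ is von Neumann regular" (it was, in the line preceding the proposition), so that the hypothesis of Lemma \ref{lvon} is met verbatim. After that, the implication "$R/I$ is a field $\Rightarrow$ $I$ is maximal" is standard, so the proof is complete.
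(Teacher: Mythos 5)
Your proof is correct and follows exactly the same route as the paper: translate both hypotheses to $R/I$ via Theorem \ref{p/}(2) and the definition of a regular ideal, then apply Lemma \ref{lvon} to conclude $R/I$ is a field and hence $I$ is maximal. No gaps.
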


\begin{proof}
Suppose $I$ is a regular quasi $J$-ideal of $R$. Then $R/I$ is a von Neumann
regular ring. Moreover, as $I\subseteq J(R)$, then $R/I$ is quasi
presimplifiable by Theorem \ref{p/}. It follows by Lemma \ref{lvon} that $R/I$
is a field and so $I$ is maximal in $R$.
\end{proof}

For a ring $R$, we recall that $f(x)=%
{\displaystyle\sum\limits_{i=0}^{n}}
a_{i}x^{i}\in R[x]$ is a unit if and only if $a_{0}\in U(R)$ and $a_{1}%
,a_{2},...,a_{n}\in N(R)$. In \cite{Anderson}, it has been proved that $R[x]$
is presimplifiable if and only if $R$ is presimplifiable and $0$ is a primary
ideal of $R$.

\begin{proposition}
\label{ps}Let $R$ be a ring. Then $R[x]$ is quasi presimplifiable if and only
if $R$ is quasi presimplifiable and $0$ is a $\delta_{1}$-$n$-ideal of $R$.
\end{proposition}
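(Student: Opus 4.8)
The plan is to characterize units in $R[x]$ via the stated criterion and then translate the quasi presimplifiable condition on $R[x]$ into conditions on $R$, using the characterization $NZ(\cdot)\subseteq J(\cdot)$ from Proposition \ref{q1} together with the known fact that $J(R[x]) = N(R)[x]$ (so that a polynomial lies in the Jacobson radical of $R[x]$ precisely when all its coefficients are nilpotent). First I would prove the forward direction: assume $R[x]$ is quasi presimplifiable. To see that $R$ is quasi presimplifiable, take $a,b\in R$ with $a=ab$ and $a\notin N(R)$; viewing this as an equation in $R[x]$, quasi presimplifiability of $R[x]$ forces $b\in U(R[x])\cap R = U(R)$ (a constant polynomial is a unit in $R[x]$ iff it is a unit in $R$). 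To see that $0$ is a $\delta_1$-$n$-ideal of $R$, suppose $ab=0$ in $R$ with $a\notin N(R)$; I must show $b\in\sqrt{0}=N(R)$. The idea is to feed a suitable polynomial into the quasi presimplifiable hypothesis of $R[x]$: consider $f = 1 - bx$ and $g = a x$ (or a similar product), arrange an equation of the form $h = h\cdot u$ in $R[x]$ with $h\notin N(R[x])$, conclude $u\in U(R[x])$, and read off from the unit criterion that the relevant coefficients (here $b$) are nilpotent.

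For the converse, assume $R$ is quasi presimplifiable and $0$ is a $\delta_1$-$n$-ideal of $R$, and let $f,g\in R[x]$ satisfy $f = fg$ with $f\notin N(R[x]) = J(R[x])$. I want $g\in U(R[x])$, i.e., writing $g = \sum_{i=0}^n c_i x^i$, I need $c_0\in U(R)$ and $c_1,\dots,c_n\in N(R)$. Write $f = \sum_{j=0}^m a_j x^j$ with $a_m$ a coefficient of $f$ that is not nilpotent (such a coefficient exists since $f\notin N(R)[x]$). Comparing top-degree coefficients in $f = fg$ gives $a_m c_n \cdots$ — more precisely the coefficient of $x^{m+n}$ in $fg$ is $a_m c_n$, which must equal $0$ (assuming $n\geq 1$; if $n=0$ the argument is easier), so $a_m c_n = 0$ with $a_m\notin N(R)$, hence $c_n\in N(R)$ because $0$ is a $\delta_1$-$n$-ideal. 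Then I would strip off the nilpotent top term and induct: since $c_n$ is nilpotent, $gx^n c_n$-type corrections are nilpotent, and one shows successively $c_{n-1},\dots,c_1\in N(R)$ by an analogous degree comparison (working modulo the nilradical, $g$ reduces to a constant). Finally, reducing the equation $f=fg$ modulo $N(R)[x]=J(R[x])$ and using that $f\notin N(R)[x]$ has some non-nilpotent coefficient, together with quasi presimplifiability of $R$ (equivalently $NZ(R)\subseteq J(R)$), forces $c_0\in U(R)$; by the unit criterion $g\in U(R[x])$.

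The main obstacle I anticipate is the converse direction, specifically the inductive descent showing $c_1,\dots,c_n\in N(R)$: the degree-comparison coefficients of $fg$ are sums $\sum_{i+j=k} a_i c_j$ rather than single products, so one cannot directly invoke the $\delta_1$-$n$-ideal property at each step. The fix is to work modulo $N(R)$ and modulo the already-known nilpotent coefficients, reducing at each stage to an equation where a single product $a_{m}c_{j}$ (or an analogous clean product involving a non-nilpotent coefficient of $f$) is forced to be nilpotent; a clean way to package this is to pick the non-nilpotent coefficient of $f$ of maximal index and argue by downward induction on $j$. One should also handle carefully the degenerate cases ($g$ constant, $f$ constant) and make sure the hypothesis $f\notin N(R[x])$ is used correctly via $J(R[x])=N(R)[x]$. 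The forward direction's second half (getting the $\delta_1$-$n$-ideal property) requires only choosing the right polynomial identity to plug into quasi presimplifiability of $R[x]$ and then applying the unit criterion, which is routine once the right witness is found.
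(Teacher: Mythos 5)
Your proposal is correct in outline and, for the forward direction, coincides exactly with the paper's argument: the witness you name, $1-bx$, is precisely the one the paper uses (from $ab=0$ one gets $a=a(1-bx)$ with $a\notin N(R[x])$, so $1-bx\in U(R[x])$ and the unit criterion gives $b\in N(R)$). Where you genuinely diverge is the converse. The paper picks a non-nilpotent coefficient $a_{i}$ of $f$ and simply asserts $a_{i}=g(x)a_{i}$, from which it reads off $a_{i}=b_{0}a_{i}$ and $a_{i}b_{j}=0$ and then applies quasi presimplifiability and the $\delta_{1}$-$n$-ideal property to these single products. You instead obtain the same conclusions by reducing $f=fg$ modulo $N(R)[x]$ and running a downward induction on the coefficients of $g$ against the non-nilpotent coefficient of $f$ of maximal index. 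Your route is the more defensible one: as you correctly diagnose, the coefficients of $fg$ are convolution sums, so the single-product identities do not fall out of $f=fg$ by bare coefficient comparison; your mod-$N(R)$ induction is exactly what is needed to isolate the products $\bar a_{i}\bar c_{j}$ (all cross terms vanish because the higher coefficients of $\bar f$ and of $\bar g$ are already zero at each stage). One remark that would shorten your converse considerably: the hypothesis that $0$ is a $\delta_{1}$-$n$-ideal is equivalent to $N(R)$ being prime, so $\left(R/N(R)\right)[x]$ is a domain and $\bar f(1-\bar g)=0$ with $\bar f\neq 0$ immediately forces $\bar g=1$; then $g-1\in N(R)[x]=N(R[x])\subseteq J(R[x])$ gives $g\in U(R[x])$ at once, and the separate appeal to quasi presimplifiability of $R$ for $c_{0}\in U(R)$ becomes unnecessary. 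The facts you invoke ($J(R[x])=N(R)[x]$ and the unit criterion for $R[x]$) are standard and correct, and the degenerate cases you flag are handled as you indicate, so the remaining work is purely expository.
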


\begin{proof}
Suppose that $R[x]$ is presimplifiable and let $a,b\in R\subseteq R[x]$ such
that $a=ab$ and $a\notin N(R)$. Then $a\notin N(R[x])$ and so by our
assumption $b\in U(R[x])$. It follows that $b\in U(R)$ and so $R$ is quasi
presimplifiable. Now, let $a,b\in R$ such that $ab=0$ and $a\notin N(R)$. Then
we have $a=a(1-bx)$ and so $1-bx\in U(R[x])$. Hence $b\in N(R)$ and $0$ is a
$\delta_{1}$-$n$-ideal. For the converse, let $f(x)=%
{\displaystyle\sum\limits_{i=0}^{n}}
a_{i}x^{i}$ , $g(x)=%
{\displaystyle\sum\limits_{j=0}^{m}}
b_{j}x^{j}$ $\in R[x]$ such that $f(x)=f(x)g(x)$ and $f(x)\notin N(R[x])$.
Then $a_{i}\notin N(R)$ for some $i$. Now, $a_{i}=g(x)a_{i}$ implies that
$a_{i}=b_{0}a_{i}$ and so $b_{0}\in U(R)$ since $R$ is quasi presimplifiable.
Moreover, for all $j\neq0$, we have $a_{i}b_{j}=0$. So, $b_{j}\in N(R)$ for
all $j\neq0$ as $0$ is a $\delta_{1}$-$n$-ideal of $R$. Therefore, $g(x)\in
U(R[x])$ and $R[x]$ is quasi presimplifiable.
\end{proof}

Recall that a ring $R$ is called a Hilbert ring if every prime ideal of $R$ is
an intersection of maximal ideals. Moreover, it is well known that $R$ is a
Hilbert ring if and only if $M\cap R$ is a maximal ideal of $R$ whenever $M $
is a maximal ideal of $R[x]$, see \cite{Hilbert}. In this case, we have
$J(R)[x]\subseteq J(R[x])$. Indeed, if $M$ is a maximal ideal of $R[x]$, then
$M\cap R$ is a maximal ideal of $R$. Hence, $J(R)[x]\subseteq(M\cap
R)[x]\subseteq M$.

In the following proposition, we determine conditions under which the
extension $I[x]$ in $R[x]$ is a quasi $J$-ideal.

\begin{proposition}
\label{pol}Let $I$ be an ideal of a ring $R$.
\end{proposition}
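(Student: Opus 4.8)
The plan is to push everything through the quotient ring. Since $R[x]/I[x]\cong (R/I)[x]$, part (2) of Theorem \ref{p/} tells us that $I[x]$ is a quasi $J$-ideal of $R[x]$ precisely when two conditions hold: (a) $I[x]\subseteq J(R[x])$, and (b) $(R/I)[x]$ is a quasi presimplifiable ring. So I would separate the argument into handling the Jacobson-radical containment (a) and the quasi presimplifiability (b), and then translate each back into a statement about $I$ in $R$.

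For (b) I would invoke Proposition \ref{ps} directly: $(R/I)[x]$ is quasi presimplifiable if and only if $R/I$ is quasi presimplifiable and $0$ is a $\delta_1$-$n$-ideal of $R/I$. The first half, together with the containment and Theorem \ref{p/}(2) applied now to $R/I$, is exactly ``$I$ is a quasi $J$-ideal of $R$''. The second half unwinds through the quotient to: $ab\in I$ with $a\notin\sqrt I$ forces $b\in\sqrt I$, i.e.\ $\sqrt I$ is prime, i.e.\ $I$ is quasi primary. For (a), I would lean on the remark recorded immediately before the statement, that for a Hilbert ring $R$ one has $J(R)[x]\subseteq J(R[x])$; hence $I\subseteq J(R)$ already yields $I[x]\subseteq J(R[x])$. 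In the other direction, contracting the containment $I[x]\subseteq J(R[x])$ down to $R$ gives $I\subseteq J(R)$ with no hypothesis at all, so (a) never asks for more than $I\subseteq J(R)$.

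Assembling the pieces, the implication ``$I[x]$ a quasi $J$-ideal of $R[x]$ $\Rightarrow$ $I$ a quasi $J$-ideal of $R$'' needs nothing special: (a) gives $I\subseteq J(R)$, Proposition \ref{ps} applied to (b) gives $R/I$ quasi presimplifiable, and Theorem \ref{p/}(2) closes it. The converse is where the Hilbert hypothesis does the work, in two spots. First, it upgrades $I\subseteq J(R)$ to $I[x]\subseteq J(R)[x]\subseteq J(R[x])$, which is condition (a). Second --- and this is the step I expect to be the trickiest bookkeeping --- it makes the $\delta_1$-$n$-ideal condition automatic: in a Hilbert ring $N(R)=J(R)$, so a quasi $J$-ideal $I$ has $\sqrt I=N(R)=J(R)$ (every nilpotent lies in $\sqrt I$ because $I\subseteq N(R)$, while $\sqrt I\subseteq N(R)$ always); then $\sqrt I=J(R)$ is a $J$-ideal by the very definition of quasi $J$-ideal, hence $J(R)$ is prime by Corollary \ref{J}, so $I$ is quasi primary, and that is precisely ``$0$ is a $\delta_1$-$n$-ideal of $R/I$''. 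Carrying the identification $\sqrt I=J(R)$ through all the equivalent phrasings of the quasi-primary / $\delta_1$-$n$ condition is the main obstacle; everything else follows mechanically from Theorem \ref{p/}, Proposition \ref{ps}, and Corollary \ref{J}.
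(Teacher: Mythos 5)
Your argument is correct, but it takes a genuinely different route from the paper's, especially for part (2). For part (1) the paper simply checks the condition of Theorem \ref{eq}(4) on constants: $ab\in I\subseteq I[x]$ with $a\notin J(R)$ forces $a\notin J(R[x])$, hence $b\in\sqrt{I[x]}\cap R=\sqrt{I}$. You instead pass through Theorem \ref{p/}(2) and Proposition \ref{ps}; this works, but it needs the small (and worth stating) observation that a constant polynomial lying in $J(R[x])$ is already in $J(R)$, since $1-ra$ is then a constant unit of $R[x]$ for every $r\in R$. For part (2) the paper uses the hypothesis ``$I$ is a $\delta_1$-$n$-ideal'' directly: Proposition \ref{delta} gives that $I$ is a quasi $J$-ideal, Theorem \ref{p/} gives that $R/I$ is quasi presimplifiable, Proposition \ref{ps} gives that $(R/I)[x]\cong R[x]/I[x]$ is quasi presimplifiable, and the Hilbert hypothesis supplies $I[x]\subseteq J(R)[x]\subseteq J(R[x])$. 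You instead observe that in a Hilbert ring $N(R)=J(R)$, so any quasi $J$-ideal satisfies $N(R)\subseteq\sqrt I\subseteq J(R)=N(R)$, whence $\sqrt I=J(R)$ is a $J$-ideal and therefore prime by Corollary \ref{J}; this makes the ``$0$ is a $\delta_1$-$n$-ideal of $R/I$'' input to Proposition \ref{ps} automatic. Your version buys more than the paper's statement: it shows that over a Hilbert ring ``$I$ is a quasi $J$-ideal of $R$'' and ``$I[x]$ is a quasi $J$-ideal of $R[x]$'' are equivalent, with the $\delta_1$-$n$ hypothesis of part (2) recoverable rather than assumed; it also makes explicit the second input to Proposition \ref{ps}, which the paper's proof of (2) passes over in silence (there it follows instead from $I$ being a $\delta_1$-$n$-ideal of $R$).

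Two slips in your bookkeeping, neither fatal: the parenthetical ``$\sqrt I\subseteq N(R)$ always'' is false in general (take $I=4\mathbb{Z}$ in $\mathbb{Z}$); what you need, and what holds here, is $\sqrt I\subseteq J(R)$ because $\sqrt I$ is a $J$-ideal, combined with $J(R)=N(R)$ for Hilbert rings. Likewise $N(R)\subseteq\sqrt I$ holds for every ideal simply because $\sqrt I\supseteq\sqrt 0$, not ``because $I\subseteq N(R)$''.
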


\begin{enumerate}
\item If $I[x]$ is a quasi $J$-ideal of $R[x]$, then $I$ is a quasi $J$-ideal
of $R$.

\item If $R$ is Hilbert, $I\subseteq J(R)$ and $I$ is a $\delta_{1}$-$n$-ideal
of $R$, then $I[x]$ is a quasi $J$-ideal of $R[x]$.
\end{enumerate}

\begin{proof}
\begin{enumerate}
\item Suppose $I[x]$ is a quasi $J$-ideal of $R[x]$ and let $a,b\in R\subseteq
R[x]$ such that $ab\in I\subseteq I[x]$ and $a\notin J(R)$. Then clearly
$a\notin J(R[x])$ and so $b\in\sqrt{I[x]}$. It follows clearly that $b\in
\sqrt{I}$ and so $I$ is a quasi $J$-ideal of $R$.

\item Suppose $I$ is a $\delta_{1}$-$n$-ideal of $R$. Then $I$ is a quasi
$J$-ideal by Proposition \ref{delta}. Thus, $R/I$ is quasi presimplifiable by
Theorem \ref{p/} (2). By Proposition \ref{ps}, we conclude that
$R[x]/I[x]\cong(R/I)[x]$ is also quasi presimplifiable. Moreover, since $R$ is
Hilbert, then $I[x]\subseteq J(R)[x]\subseteq J(R[x])$. Therefore, $I[x]$ is a
quasi $J$-ideal of $R[x]$ again by Theorem \ref{p/} (2).
\end{enumerate}
\end{proof}

Recall that $(\Lambda,\leq)$ is called a directed quasi-ordered set if $\leq$
is a reflexive and transitive relation on $\Lambda$ and for $\alpha,\beta
\in\Lambda$, there exists $\gamma\in\Lambda$ with $\alpha\leq\gamma$ and
$\beta\leq\gamma$. A system of rings over $(\Lambda,\leq)$ is a collection
$\{R_{\alpha}:\alpha\in\Lambda\}$ of rings, together with ring homomorphisms
$\varphi_{\alpha,\beta}:R_{\alpha}\rightarrow R_{\beta}$ for all $\alpha
,\beta\in\Lambda$ with $\alpha\leq\beta$ such that $\varphi_{\beta,\gamma
}\circ\varphi_{\alpha,\beta}=\varphi_{\alpha,\gamma}$ whenever $\alpha
\leq\beta\leq\gamma$ and such that $\varphi_{\alpha,\alpha}=Id_{R_{\alpha}}$
for all $\alpha$. A direct limit of $\{R_{\alpha}:\alpha\in\Lambda\}$ is a
ring $R$ together with ring homomorphisms $\varphi_{\alpha}:R_{\alpha
}\rightarrow R$ such that $\varphi_{\beta}\circ\varphi_{\alpha,\beta}%
=\varphi_{\alpha}$ for all $\alpha,\beta\in\Lambda$ with $\alpha\leq\beta$ and
such that following property is satisfied: For any ring $S$ and collection
$\{f_{\alpha}:\alpha\in\Lambda\}$ of ring maps $f:R_{\alpha}\rightarrow S$
such that $f_{\beta}\circ\varphi_{\alpha,\beta}=f_{\alpha}$ for all
$\alpha,\beta\in\Lambda$ with $\alpha\leq\beta$, there is a unique ring
homomorphism $f:R\rightarrow S$ with $f\circ\varphi_{\alpha}=f_{\alpha}$ for
all $\alpha\in\Lambda$. This direct limit is usually denoted by
$R=\underrightarrow{\lim}R_{\alpha}$.

\begin{lemma}
\cite{Allen} Let $\{R_{\alpha}:\alpha\in\Lambda\}$ be a system of rings and
let $R=\underrightarrow{\lim}R_{\alpha}$. If $\{I_{\alpha}:\alpha\in\Lambda\}$
is a family of ideals over $\{R_{\alpha}:\alpha\in\Lambda\}$, then
$I=\sum\limits_{\alpha\in\Lambda}\varphi_{\alpha}(I_{\alpha})=\bigcup
\limits_{\alpha\in\Lambda}\varphi_{\alpha}(I_{\alpha})$ is an ideal of $R$.
Moreover, $R/I=\underrightarrow{\lim}R_{\alpha}/I_{\alpha}$.
\end{lemma}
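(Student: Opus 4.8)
The plan is to work directly with the explicit model of the direct limit. Recall the three standard facts I will lean on: every element of $R=\underrightarrow{\lim}R_{\alpha}$ has the form $\varphi_{\alpha}(x)$ for some $\alpha\in\Lambda$ and $x\in R_{\alpha}$; one has $\varphi_{\alpha}(x)=0$ in $R$ exactly when $\varphi_{\alpha,\beta}(x)=0$ for some $\beta\geq\alpha$; and, by directedness, any finite list of elements of $R$ can be written as $\varphi_{\gamma}(x_{1}),\dots,\varphi_{\gamma}(x_{n})$ coming from a single index $\gamma$. Throughout I read ``family of ideals over $\{R_{\alpha}\}$'' as the requirement $\varphi_{\alpha,\beta}(I_{\alpha})\subseteq I_{\beta}$ whenever $\alpha\leq\beta$; this is precisely what makes the induced maps $\bar{\varphi}_{\alpha,\beta}:R_{\alpha}/I_{\alpha}\rightarrow R_{\beta}/I_{\beta}$, $x+I_{\alpha}\mapsto\varphi_{\alpha,\beta}(x)+I_{\beta}$, well defined, so that $\{R_{\alpha}/I_{\alpha}\}$ is again a system of rings and $\underrightarrow{\lim}R_{\alpha}/I_{\alpha}$ makes sense.

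For the first assertion, I would first check $\sum_{\alpha}\varphi_{\alpha}(I_{\alpha})=\bigcup_{\alpha}\varphi_{\alpha}(I_{\alpha})$: given $a_{i}\in I_{\alpha_{i}}$ for $i=1,\dots,n$, pick $\gamma$ above all the $\alpha_{i}$, whence $\sum_{i}\varphi_{\alpha_{i}}(a_{i})=\varphi_{\gamma}\!\left(\sum_{i}\varphi_{\alpha_{i},\gamma}(a_{i})\right)$ and $\sum_{i}\varphi_{\alpha_{i},\gamma}(a_{i})\in I_{\gamma}$ by compatibility. That $I:=\bigcup_{\alpha}\varphi_{\alpha}(I_{\alpha})$ is an additive subgroup follows the same way (closure under negation is immediate since $\varphi_{\alpha}(-a)=-\varphi_{\alpha}(a)$), and absorption is similar: if $r=\varphi_{\beta}(s)$ and $a\in I_{\alpha}$, choose $\gamma\geq\alpha,\beta$, so $r\varphi_{\alpha}(a)=\varphi_{\gamma}\!\left(\varphi_{\beta,\gamma}(s)\,\varphi_{\alpha,\gamma}(a)\right)\in\varphi_{\gamma}(I_{\gamma})$ because $\varphi_{\alpha,\gamma}(a)\in I_{\gamma}$ and $I_{\gamma}$ is an ideal.

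For the isomorphism I would exhibit mutually inverse maps. Composing $\varphi_{\alpha}$ with the quotient $\pi:R\rightarrow R/I$ gives ring homomorphisms killing $I_{\alpha}$ (as $\varphi_{\alpha}(I_{\alpha})\subseteq I$), hence inducing $\psi_{\alpha}:R_{\alpha}/I_{\alpha}\rightarrow R/I$ with $\psi_{\beta}\circ\bar{\varphi}_{\alpha,\beta}=\psi_{\alpha}$; the universal property of $\underrightarrow{\lim}R_{\alpha}/I_{\alpha}$ then yields a unique $\Psi:\underrightarrow{\lim}R_{\alpha}/I_{\alpha}\rightarrow R/I$. Conversely, the maps $R_{\alpha}\rightarrow R_{\alpha}/I_{\alpha}\rightarrow\underrightarrow{\lim}R_{\beta}/I_{\beta}$ are compatible with the $\varphi_{\alpha,\beta}$, so the universal property of $R$ gives $g:R\rightarrow\underrightarrow{\lim}R_{\beta}/I_{\beta}$; since $g$ annihilates every $\varphi_{\alpha}(I_{\alpha})$, it annihilates $I$ and descends to $\bar{g}:R/I\rightarrow\underrightarrow{\lim}R_{\beta}/I_{\beta}$. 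One then checks $\Psi\circ\bar{g}=\mathrm{id}$ and $\bar{g}\circ\Psi=\mathrm{id}$ by evaluating on the images of the structure maps (which generate both rings) and invoking the uniqueness clauses of the two universal properties.

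The step I expect to be the main obstacle is the injectivity of $\bar g$ (equivalently of $\Psi$): that no nonzero class of $R/I$ becomes zero in $\underrightarrow{\lim}R_{\alpha}/I_{\alpha}$. Concretely, if $\varphi_{\alpha}(x)$ maps to $0$, then $x+I_{\alpha}$ becomes $0$ in the quotient limit, so $\bar{\varphi}_{\alpha,\beta}(x+I_{\alpha})=0$ for some $\beta\geq\alpha$, i.e. $\varphi_{\alpha,\beta}(x)\in I_{\beta}$; hence $\varphi_{\alpha}(x)=\varphi_{\beta}(\varphi_{\alpha,\beta}(x))\in\varphi_{\beta}(I_{\beta})\subseteq I$, so its class in $R/I$ was already $0$. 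This is essentially the exactness of the functor $\underrightarrow{\lim}$, and an equally valid alternative is to quote that exactness outright: apply it to the short exact sequences $0\rightarrow I_{\alpha}\rightarrow R_{\alpha}\rightarrow R_{\alpha}/I_{\alpha}\rightarrow 0$ to get $0\rightarrow\underrightarrow{\lim}I_{\alpha}\rightarrow R\rightarrow\underrightarrow{\lim}R_{\alpha}/I_{\alpha}\rightarrow 0$, and identify the image of $\underrightarrow{\lim}I_{\alpha}$ in $R$ with $I$ using the first part.
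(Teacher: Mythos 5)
The paper does not prove this lemma at all: it is quoted verbatim from the reference \cite{Allen} (Bell--Stalder--Teply) and used as a black box, so there is no internal proof to compare yours against. Your write-up is a correct, self-contained verification of the standard fact. The three ingredients you isolate --- every element of $R$ is $\varphi_{\alpha}(x)$ for some stage $\alpha$, vanishing in $R$ is detected at a later stage, and finitely many elements can be pushed to a common stage --- are exactly the properties of a colimit over a \emph{directed} index set, which is the hypothesis in force here, and they carry all the content; your reading of ``family of ideals over $\{R_{\alpha}\}$'' as the compatibility condition $\varphi_{\alpha,\beta}(I_{\alpha})\subseteq I_{\beta}$ is the intended one and is genuinely needed (both for $\sum=\bigcup$ and for $\{R_{\alpha}/I_{\alpha}\}$ to form a system). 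The computations for ``sum equals union,'' closure under addition, and absorption are all right. For the isomorphism, note that once you verify $\Psi\circ\bar{g}=\mathrm{id}$ and $\bar{g}\circ\Psi=\mathrm{id}$ via the uniqueness clauses of the two universal properties (which does go through: both composites agree with the identity on the images of the structure maps, and $\pi$ is surjective), the separate injectivity argument is logically redundant --- but it is the honest substance of the statement, and your concrete version of it (if $\varphi_{\alpha}(x)$ dies in $\underrightarrow{\lim}R_{\beta}/I_{\beta}$ then $\varphi_{\alpha,\beta}(x)\in I_{\beta}$ for some $\beta\geq\alpha$, whence $\varphi_{\alpha}(x)\in I$) is correct, as is the alternative via exactness of $\underrightarrow{\lim}$ applied to $0\rightarrow I_{\alpha}\rightarrow R_{\alpha}\rightarrow R_{\alpha}/I_{\alpha}\rightarrow0$. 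In short: the proposal is sound and supplies a proof the paper deliberately omits.
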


In \cite{And2}, it is proved that if $\{R_{\alpha}:\alpha\in\Lambda\}$ is a
system of presimplifiable rings, then so is $R=\underrightarrow{\lim}%
R_{\alpha}$. In the following proposition, we generalize this result to quasi
presimplifiable case.

\begin{proposition}
\label{dlim}Let $(\Lambda,\leq)$ be a directed quasi-ordered set and let
$\{R_{\alpha}:\alpha\in\Lambda\}$ be a direct system of rings. If each
$R_{\alpha}$ is quasi presimplifiable, then the direct limit
$R=\underrightarrow{\lim}R_{\alpha}$ is quasi presimplifiable.
\end{proposition}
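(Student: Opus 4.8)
The plan is to work directly from the definition of quasi presimplifiable together with the characterization of direct limits of quotients given in the preceding lemma. Recall that every element of $R=\underrightarrow{\lim}R_{\alpha}$ is of the form $\varphi_{\alpha}(a)$ for some $\alpha\in\Lambda$ and some $a\in R_{\alpha}$, and that two such representatives $\varphi_{\alpha}(a)$, $\varphi_{\beta}(b)$ are equal in $R$ precisely when there is $\gamma\geq\alpha,\beta$ with $\varphi_{\alpha,\gamma}(a)=\varphi_{\beta,\gamma}(b)$; here is where directedness of $(\Lambda,\leq)$ is used. I will also use the standard facts that $N(R)=\bigcup_{\alpha}\varphi_{\alpha}(N(R_{\alpha}))$ and that a unit of $R$ is (the image of) a unit of some $R_{\alpha}$ — the latter because if $\varphi_{\alpha}(a)\varphi_{\beta}(c)=1$, pushing both factors into a common $R_{\gamma}$ gives $\varphi_{\alpha,\gamma}(a)\varphi_{\beta,\gamma}(c)=\varphi_{\gamma,\gamma}(1)=1$ in $R_{\gamma}$.

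The main step: suppose $x,y\in R$ with $x=xy$, say $x=\varphi_{\alpha}(a)$ and $y=\varphi_{\beta}(b)$. Choose $\gamma\in\Lambda$ with $\alpha\leq\gamma$ and $\beta\leq\gamma$, and set $a'=\varphi_{\alpha,\gamma}(a)$, $b'=\varphi_{\beta,\gamma}(b)$ in $R_{\gamma}$, so that $x=\varphi_{\gamma}(a')$ and $y=\varphi_{\gamma}(b')$. Then $\varphi_{\gamma}(a')=\varphi_{\gamma}(a'b')$, which by the equality criterion in the direct limit yields some $\delta\geq\gamma$ with $\varphi_{\gamma,\delta}(a')=\varphi_{\gamma,\delta}(a'b')=\varphi_{\gamma,\delta}(a')\varphi_{\gamma,\delta}(b')$. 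Writing $a''=\varphi_{\gamma,\delta}(a')$, $b''=\varphi_{\gamma,\delta}(b')$, we have $a''=a''b''$ in $R_{\delta}$. Since $R_{\delta}$ is quasi presimplifiable, either $a''\in N(R_{\delta})$, whence $x=\varphi_{\delta}(a'')\in\varphi_{\delta}(N(R_{\delta}))\subseteq N(R)$, or $b''\in U(R_{\delta})$, whence $y=\varphi_{\delta}(b'')\in U(R)$. In either case the defining condition for quasi presimplifiability of $R$ is met, so $R$ is quasi presimplifiable.

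An alternative, slicker route is to invoke Proposition \ref{q1}: it suffices to show $NZ(R)\subseteq J(R)$. One could try to realize $J(R)=\bigcup_{\alpha}\varphi_{\alpha}(J(R_{\alpha}))$ and $NZ(R)$ compatibly with the direct limit, but describing $NZ$ of a direct limit is delicate (the annihilator of $\varphi_{\alpha}(a)$ in $R$ need not be the image of an annihilator in a single $R_{\alpha}$), so I would not pursue this; the elementwise argument above is cleaner.

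The only real obstacle is the bookkeeping with representatives and the careful use of the equality relation in the colimit — one must be disciplined about passing to a large enough index $\delta$ so that the relation $a=ab$ holds literally in $R_{\delta}$, not merely up to the colimit equivalence. Once that is handled, the quasi presimplifiability of the single ring $R_{\delta}$ finishes the argument, and the stability of $N(R)$ and $U(R)$ under the canonical maps transports the conclusion back to $R$. This mirrors the presimplifiable case proved in \cite{And2}, with $N$ in place of $0$.
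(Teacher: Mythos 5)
Your proposal is correct and follows essentially the same route as the paper: lift the relation $x=xy$ to a single ring $R_{\delta}$ in the system, apply quasi presimplifiability there, and transport the conclusion back via the canonical maps (using that $\varphi_{\delta}$ preserves nilpotents and units). You are in fact slightly more careful than the paper, which simply asserts the existence of representatives satisfying $x_{\alpha_{0}}y_{\alpha_{0}}=x_{\alpha_{0}}$ in some $R_{\alpha_{0}}$ without spelling out the colimit equality criterion that justifies it.
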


\begin{proof}
Let $x,y\in R$ with $x=xy$ and $x\notin N(R)$. For $\alpha\in\Lambda$, let
$\varphi_{\alpha}:R_{\alpha}\rightarrow R$ be the natural map. Then there
exist $\alpha_{0}\in\Lambda$ and $x_{\alpha_{0}},y_{\alpha_{0}}\in
R_{\alpha_{0}}$ such that $\varphi_{\alpha_{0}}(x_{\alpha_{0}})=x$ ,
$\varphi_{\alpha_{0}}(y_{\alpha_{0}})=y$ and $x_{\alpha_{0}}y_{\alpha_{0}%
}=x_{\alpha_{0}}$. Since $x\notin N(R)$, then $x_{\alpha_{0}}\notin
N(R_{\alpha_{0}})$, see \cite{Attiyah}, and so $y_{\alpha_{0}}\in
U(R_{\alpha_{0}})$ as $R_{\alpha_{0}}$ is quasi presimplifiable. Therefore,
$y=\varphi_{\alpha_{0}}(y_{\alpha_{0}})\in U(R)$ and so $R$ is quasi presimplifiable.
\end{proof}

\begin{theorem}
\label{dun}Let $(\Lambda,\leq)$ be a directed quasi-ordered set and let
$\{R_{\alpha}:\alpha\in\Lambda\}$ be a direct system of rings. If
$\{I_{\alpha}:\alpha\in\Lambda\}$ is a family of $J$-deals (resp. quasi
$J$-ideals) over $\{R_{\alpha}:\alpha\in\Lambda\}$, then $I=\bigcup
\limits_{\alpha\in\Lambda}\varphi_{\alpha}(I_{\alpha})$ is a $J$-ideal (resp.
quasi $J$-ideal) of $R=\underrightarrow{\lim}R_{\alpha}$.
\end{theorem}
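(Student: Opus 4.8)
The plan is to reduce the statement to the quotient characterization of ($J$-ideals and) quasi $J$-ideals proved in Theorem \ref{p/}. Recall from that theorem that a proper ideal $I$ of a ring $R$ is a quasi $J$-ideal if and only if $I \subseteq J(R)$ and $R/I$ is quasi presimplifiable, and similarly $I$ is a $J$-ideal if and only if $I \subseteq J(R)$ and $R/I$ is presimplifiable. So it suffices to verify two things about $I = \bigcup_{\alpha \in \Lambda} \varphi_{\alpha}(I_{\alpha})$: first, that $I$ is a proper ideal of $R$ contained in $J(R)$, and second, that $R/I$ is (quasi) presimplifiable.

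First I would invoke the cited lemma of Allen to see that $I = \sum_{\alpha} \varphi_{\alpha}(I_{\alpha}) = \bigcup_{\alpha} \varphi_{\alpha}(I_{\alpha})$ is an ideal of $R$ and that $R/I \cong \underrightarrow{\lim}\, R_{\alpha}/I_{\alpha}$. Since each $I_{\alpha}$ is a quasi $J$-ideal (resp. $J$-ideal) of $R_{\alpha}$, Theorem \ref{p/} gives that each $R_{\alpha}/I_{\alpha}$ is quasi presimplifiable (resp. presimplifiable). In the quasi $J$-ideal case, Proposition \ref{dlim} then shows the direct limit $R/I = \underrightarrow{\lim}\, R_{\alpha}/I_{\alpha}$ is quasi presimplifiable; in the $J$-ideal case, the analogous result cited from \cite{And2} shows $R/I$ is presimplifiable. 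This is exactly the quotient condition needed in Theorem \ref{p/}.

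The remaining point, and the one I would be most careful about, is checking $I \subseteq J(R)$ and that $I$ is proper. For containment in the Jacobson radical: each $I_{\alpha} \subseteq J(R_{\alpha})$ by \cite[Proposition 2.2]{Hani} (resp. Proposition \ref{J(I)}), so each $\varphi_{\alpha}(I_{\alpha}) \subseteq \varphi_{\alpha}(J(R_{\alpha}))$; one then needs $\varphi_{\alpha}(J(R_{\alpha})) \subseteq J(R)$, which follows because for any $a \in J(R_{\alpha})$ the element $1 - r\varphi_{\alpha}(a)$ is a unit in $R$ for all $r \in R$ — indeed $r$ and $a$ can be realized over some $R_{\beta}$ with $\alpha \le \beta$, where $1 - \varphi_{\alpha,\beta}(r')\varphi_{\alpha,\beta}(a)$ is a unit since $\varphi_{\alpha,\beta}(a) \in J(R_{\beta})$, and units are preserved by $\varphi_{\beta}$. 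Taking the union over $\alpha$ gives $I \subseteq J(R)$. Properness of $I$ follows since $1 \notin I$: if $1 \in \varphi_{\alpha}(I_{\alpha})$ for some $\alpha$ then, passing to a suitable $R_{\beta}$, the image of $I_{\beta}$ would contain $1$, forcing $I_{\beta}$ to be improper (or $R_{\beta}$ trivial), contradicting that $I_{\beta}$ is a proper ideal; alternatively, $I \subseteq J(R) \ne R$ already gives properness once the first point is established.

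Putting these together: $I$ is a proper ideal of $R$ with $I \subseteq J(R)$ and $R/I$ quasi presimplifiable (resp. presimplifiable), so by Theorem \ref{p/} (2) (resp. (1)) $I$ is a quasi $J$-ideal (resp. a $J$-ideal) of $R$, which is the claim. The only genuinely non-routine step is the Jacobson radical bookkeeping across the direct system — making sure that membership in $J$ is transported correctly along the $\varphi_{\alpha,\beta}$ and then to $R$ — and that is handled by the unit-preservation argument sketched above, using the directedness of $\Lambda$ to realize finitely many elements simultaneously over a common $R_{\beta}$.
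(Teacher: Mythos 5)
Your proposal is essentially the paper's own proof: reduce via Theorem \ref{p/} to showing that $I\subseteq J(R)$ and that $R/I=\underrightarrow{\lim}\,R_{\alpha}/I_{\alpha}$ is (quasi) presimplifiable, the latter coming from Theorem \ref{p/} applied to each $R_{\alpha}/I_{\alpha}$ together with Proposition \ref{dlim} (resp.\ the analogous result of \cite{And2}), and the former from the unit argument $1-rx=\varphi(1-r'x')\in U(R)$. The only point worth flagging --- and it occurs in the paper's proof as well --- is the step where you assert $\varphi_{\alpha,\beta}(a)\in J(R_{\beta})$ for $a\in J(R_{\alpha})$: a ring homomorphism need not carry the Jacobson radical into the Jacobson radical (e.g.\ $\mathbb{Z}_{\left\langle 2\right\rangle}\rightarrow\mathbb{Q}$ sends $2$ to a unit), so the intermediate claim $\bigcup_{\alpha}\varphi_{\alpha}(J(R_{\alpha}))\subseteq J(R)$ is not justified as stated. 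The repair is easy and stays within your argument: run the unit computation on elements of $I$ itself rather than of the Jacobson radicals. Given $x=\varphi_{\alpha}(x_{\alpha})$ with $x_{\alpha}\in I_{\alpha}$ and $r=\varphi_{\beta}(r_{\beta})$, choose $\gamma$ with $\alpha\leq\gamma$ and $\beta\leq\gamma$; the compatibility of the family of ideals gives $\varphi_{\alpha,\gamma}(x_{\alpha})\in I_{\gamma}\subseteq J(R_{\gamma})$, whence $1-rx$ is the image under $\varphi_{\gamma}$ of a unit of $R_{\gamma}$ and so $x\in J(R)$. With that adjustment your properness observation ($I\subseteq J(R)\neq R$) and the rest of the reduction go through exactly as in the paper.
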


\begin{proof}
For all $\alpha\in\Lambda$, we have $I_{\alpha}\subseteq J(R_{\alpha})$.
Hence, $I=\bigcup\limits_{\alpha\in\Lambda}\varphi_{\alpha}(I_{\alpha
})\subseteq\bigcup\limits_{\alpha\in\Lambda}\varphi_{\alpha}(J(R_{\alpha
}))\subseteq J(\underrightarrow{\lim}R_{\alpha})=J(R)$. Indeed, let
$x\in\bigcup\limits_{\alpha\in\Lambda}\varphi_{\alpha}(J(R_{\alpha}))$ and
$r\in R$. Then there exist $\alpha_{0}\in\Lambda$ and $x_{\alpha_{0}%
},r_{\alpha_{0}}\in R_{\alpha_{0}}$ such that $\varphi_{\alpha_{0}}%
(x_{\alpha_{0}})=x$ and $\varphi_{\alpha_{0}}(r_{\alpha_{0}})=r$. Now,
$1-rx=\varphi_{\alpha_{0}}(1_{R_{\alpha_{0}}}-r_{\alpha_{0}}x_{\alpha_{0}}%
)\in\varphi_{\alpha_{0}}(U(R_{\alpha_{0}}))\subseteq U(R)$ and so $x\in J(R)$.
Since for all $\alpha\in\Lambda$, $I_{\alpha}$ is a $J$-ideal (quasi
$J$-ideal), then $R_{\alpha}/I_{\alpha}$ is a presimplifiable (quasi
presimplifiable) ring by Theorem \ref{p/}. This implies that
$R/I=\underrightarrow{\lim}R_{\alpha}/I_{\alpha}$ is presimplifiable (quasi
presimplifiable) by Proposition \ref{dlim}. It follows again by Theorem
\ref{p/} that $I$ is a $J$-ideal (quasi $J$-ideal) of $R$.
\end{proof}

Finally, for a ring $R$, an ideal $I$ of $R$ and an $R$-module $M$, we
determine when is the ideal $I(+)M$ quasi $J$-ideal in $R(+)M$.

\begin{proposition}
\label{pide}Let $I$ be an ideal of a ring $R$ and let $M$ be an $R$-module.
Then $I(+)M$ is a quasi $J$-ideal of $R(+)M$ if and only if $I$ is a quasi
$J$-ideal of $R$.
\end{proposition}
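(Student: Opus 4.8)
The plan is to use the structural characterization from Theorem \ref{p/}(2), together with the known facts about idealization rings quoted in the introduction, namely $\sqrt{I(+)M}=\sqrt{I}(+)M$ and $J(R(+)M)=J(R)(+)M$. The key observation is that $I(+)M\subseteq J(R(+)M)=J(R)(+)M$ if and only if $I\subseteq J(R)$, since the $M$-component of $J(R(+)M)$ is all of $M$. So by Theorem \ref{p/}(2), applied once to $R(+)M$ and once to $R$, the statement reduces to showing that, assuming $I\subseteq J(R)$, the quotient $(R(+)M)/(I(+)M)$ is quasi presimplifiable if and only if $R/I$ is quasi presimplifiable.

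First I would identify $(R(+)M)/(I(+)M)$ with $(R/I)(+)(M/IM)$ as rings; this is a standard isomorphism for idealizations (the condition $IM\subseteq M$ is automatic, so $I(+)M$ is indeed an ideal of $R(+)M$). Thus it suffices to prove the following lemma: for any ring $R$ and any $R$-module $N$, the idealization $R(+)N$ is quasi presimplifiable if and only if $R$ is quasi presimplifiable. This I would prove by a direct computation with the definition. Recall $N(R(+)N)=N(R)(+)N$ and $U(R(+)N)=U(R)(+)N$ (the latter because $(r,n)$ is a unit iff $r$ is a unit in $R$). For the forward direction, given $a,b\in R$ with $a=ab$ and $a\notin N(R)$, view this inside $R(+)N$: $(a,0)=(a,0)(b,0)$ with $(a,0)\notin N(R)(+)N=N(R(+)N)$, so $(b,0)\in U(R(+)N)=U(R)(+)N$, hence $b\in U(R)$. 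For the converse, suppose $(a,m)=(a,m)(b,n)$ in $R(+)N$ with $(a,m)\notin N(R(+)N)$, i.e. $a\notin N(R)$. The $R$-component gives $a=ab$, so $b\in U(R)$ since $R$ is quasi presimplifiable, and therefore $(b,n)\in U(R)(+)N=U(R(+)N)$.

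Putting the pieces together: $I(+)M$ is a quasi $J$-ideal of $R(+)M$ iff $I(+)M\subseteq J(R(+)M)$ and $(R(+)M)/(I(+)M)$ is quasi presimplifiable (Theorem \ref{p/}(2)), iff $I\subseteq J(R)$ and $(R/I)(+)(M/IM)$ is quasi presimplifiable, iff $I\subseteq J(R)$ and $R/I$ is quasi presimplifiable (by the lemma), iff $I$ is a quasi $J$-ideal of $R$ (Theorem \ref{p/}(2) again). I expect the main obstacle to be purely bookkeeping: being careful that $I(+)M$ really is an ideal and that the quotient identification $(R(+)M)/(I(+)M)\cong (R/I)(+)(M/IM)$ is set up correctly, and making sure the characterizations of $N$, $U$, and $J$ for idealizations are invoked in the right places. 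There is no deep difficulty; the whole proof is a translation through the presimplifiability characterization. One could alternatively bypass Theorem \ref{p/} and argue directly from Theorem \ref{eq}: if $(a,m)(b,n)\in I(+)M$ with $(a,m)\notin J(R(+)M)=J(R)(+)M$, then $a\notin J(R)$ and $ab\in I$, so $b\in\sqrt{I}$, whence $(b,n)\in\sqrt{I}(+)M=\sqrt{I(+)M}$; the converse is the same computation restricted to the $R$-component. I would likely present whichever of these two routes is shorter, but the Theorem \ref{p/} route is cleanest since it reuses the idealization facts already recorded.
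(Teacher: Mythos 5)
Your overall route is the same as the paper's: check that $I(+)M\subseteq J(R(+)M)$ iff $I\subseteq J(R)$, identify the quotient ring, and push everything through Theorem \ref{p/}(2). However, there is one concrete error in the middle: the identification $(R(+)M)/(I(+)M)\cong (R/I)(+)(M/IM)$ is false. The ideal $I(+)M$ contains \emph{all} of $M$ in its second coordinate, so two elements $(r_1,m_1)$ and $(r_2,m_2)$ are congruent modulo $I(+)M$ exactly when $r_1-r_2\in I$; the quotient is therefore $(R/I)(+)(M/M)\cong R/I$, which is precisely the isomorphism the paper uses (and which makes any auxiliary lemma about idealizations unnecessary). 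The quotient $(R/I)(+)(M/IM)$ would arise from the ideal $I(+)IM$, not from $I(+)M$.

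That said, the error happens not to be fatal: the lemma you prove --- that $R(+)N$ is quasi presimplifiable iff $R$ is, for any module $N$ --- is correct (via $N(R(+)N)=N(R)(+)N$ and $U(R(+)N)=U(R)(+)N$), and it yields the needed equivalence whether the quotient is $(R/I)(+)(M/IM)$ or simply $R/I$ (take $N=0$). So after correcting the quotient, your argument closes, just with more machinery than the paper needs. Your alternative direct argument via Theorem \ref{eq} --- from $(a,m)(b,n)\in I(+)M$ with $(a,m)\notin J(R)(+)M$ deduce $a\notin J(R)$, $ab\in I$, hence $b\in\sqrt{I}$ and $(b,n)\in\sqrt{I}(+)M=\sqrt{I(+)M}$, with the converse obtained by restricting to elements of the form $(a,0)$ --- is also valid and self-contained; had you led with that, there would be nothing to correct.
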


\begin{proof}
We have $I(+)M\subseteq J(R(+)M)$ if and only if $I\subseteq J(R)$ and
$R/I\cong R(+)M/I(+)M$. Therefore, the result follows directly by Theorem
\ref{p/}.
\end{proof}

\end{document}